\newtheorem{theorem}{Theorem}[section]
\newtheorem{lemma}[theorem]{Lemma}
\newtheorem{corollary}[theorem]{Corollary}
\newtheorem{proposition}[theorem]{Proposition}
\newtheorem{observation}[theorem]{Observation}
\newtheorem{claim}[theorem]{Claim}
\newtheorem{notation}[theorem]{Notation}
\theoremstyle{remark}
\newtheorem{remark}[theorem]{Remark}
\theoremstyle{definition}
\DeclareMathOperator{\reg}{reg}
\DeclareMathOperator{\lcm}{lcm}
\DeclareMathOperator{\mingens}{Mingens}
\DeclareMathOperator{\lex}{lex}
\DeclareMathOperator{\diam}{diam}
\DeclareMathOperator{\trim}{trim}
\DeclareMathOperator{\length}{length}
\title{Trees whose path ideals have linear quotients}
\author[Chau]{Trung Chau}
\address{Chennai Mathematical Institute, India}
\email{chauchitrung1996@gmail.com}
\author[Das]{Kanoy Kumar Das}
\address{Chennai Mathematical Institute, India}
\email{ kanoydas0296@gmail.com; kanoydas@cmi.ac.in}
\author[Dutta Dhar]{Animikha Dutta Dhar}
\address{Chennai Mathematical Institute, India}
\email{animikha.ug2024@cmi.ac.in}
\author[Karanth]{Pranath S Karanth}
\address{Chennai Mathematical Institute, India}
\email{pranathk.ug2024@cmi.ac.in}
\author[Suswaram]{Aniruda Suswaram}
\address{Chennai Mathematical Institute, India}
\email{anirudas.ug2024@cmi.ac.in; anirudasuswaram06@gmail.com}
\keywords{path ideals; trees; linear resolution; linear quotients}
\subjclass[2020]{13F55, 05E40, 05C05}
\begin{document}
	
    \begin{abstract}
    For any integer $n$, we classify all trees whose $n$-path ideals have linear quotients. 
    \end{abstract}

    \maketitle

	
	\section{Introduction}
	A monomial ideal $I\subseteq \Bbbk[x_1,x_2,\dots , x_m]$ is said to have \emph{linear quotients} if there is an ordering of the monomials in $\mingens(I)=\{f_1, f_2, \dots , f_s\}$ such that the successive colon ideals $(f_1,f_2, \dots , f_i):f_{i+1}$ are generated by variables for all $1\leq i\leq s-1$. Ideals with linear quotients were introduced by Herzog and Takayama \cite{HerzogTakayama2002} and they have strong combinatorial implications. In the case of square-free monomial ideals, linear quotients and shellable simplicial complexes are dual concepts. This fact attracted many researchers to identify ideals with linear quotients and explore their properties \cite{ JahanZheng2010,SharifanVarbaro2008}. 

    In \cite{Villarreal1990}, Villarreal introduced the class of edge ideals of graphs and studied their various algebraic properties which relate to the combinatorics of the corresponding graphs. Arguably the most celebrated result in this theme is that of Fr\"oberg \cite{Froberg}, which states that a graph is co-chordal if and only if its edge ideal has linear resolution. Since then, an astounding amount of research has been carried out around edge ideals, as this class connects two different areas of mathematics, namely, graph theory and commutative algebra. Later on, several generalizations of edge ideals, such as path ideals, connected ideals, weighted oriented edge ideals, etc., have been introduced to carry forward this theme of research. A natural question one can ask is whether one can find an analog of the celebrated result of Fr\"oberg \cite{Froberg} for these classes of ideals, that is, to classify ideals which have linear resolution. In this context, having linear quotients implies having linear resolution, and in fact, the converse holds for edge ideals.

    
    Conca and De Negri \cite{ConcaDeNegri1998} were the first to consider path ideals of directed graphs to study $M$-sequences. Over the years, many algebraic properties and invariants of path ideals of directed graphs such as (Castelnuovo-Mumford) regularity, projective dimension, graded betti numbers, Cohen-Macaulayness, etc., have been intensively investigated \cite{BouchatBrown2017,  BouchatHaOkeefe2011,Erey2020, KianiMadani2016}. In this work, we consider path ideals of simple undirected graphs. Let $G$ be a simple graph with $V(G)=\{x_1, x_2, \dots , x_m\}$. The \textit{$n$-path ideal of $G$}, denoted by $J_n(G)$, is defined to be the ideal of $\Bbbk[x_1,x_2, \dots , x_m]$ given by \[
    J_n(G)\coloneqq \left(\prod_{j=1}^n x_{i_j}\mid x_{i_1},x_{i_2},\dots ,x_{i_n} \text{ forms a }n\text{-path in }G\right).
    \] 
    
    Alilooee and Faridi \cite{AlilooeeFaridi2018, AlilooeeFaridi2015} considered $n$-path ideals of lines and cycles, and studied their Betti numbers. 
    In \cite{Banerjee2017}, it has been shown that if $G$ is gap-free and claw free, then $J_n(G)$ has linear resolution for $n=3,4,5,6$; and in addition, if $G$ is whiskered $K_4$-free, then $J_n(G)$ has linear resolution for all $n\geq 3$. In \cite{DasRoySaha2024}, $3$-path ideals of chordal graphs have been studied, and an exact formula for the regularity of $n$-path ideals of caterpillar graphs was given. Other related works in this direction are \cite{HangVu2025, KumarSarkar2024}. In these work, the authors use different techniques to determine the regularity of $J_n(G)$, obtaining instances of linear resolution as a corollary. In other words, their results do not imply that these ideals have linear quotients.
    
    Recently, in \cite{AJM2024}, the authors considered connected ideals of graphs, which is another generalization of edge ideals, and as a consequence of their main result \cite[Theorem 5.1]{AJM2024}, it follows that $J_3(G)$ have linear quotients if and only if it has linear resolution if and only if $G$ does not contain $P_3+P_3$, the disjoint union of two paths of length 2, as an induced subgraph, when $G$ is a tree. Moreover, $J_2(G)$ is the edge ideal of $G$, and thus a corollary of Fr\"oberg theorem is: for a tree $G$, the ideal $J_2(G)$ has linear quotients if and only if it has linear resolution if and only if $G$ does not contain $P_2+P_2$, the disjoint union of two edges, as an induced subgraph. In this article, for $n\geq 4$, we consider $n$-path ideals of trees and investigate when they have linear quotients (and linear resolution). The goal is to extend the aforementioned results for $J_2(G)$ and $J_3(G)$.
    
    Any square-free monomial ideal can be treated as an edge ideal of a hypergraph. 
    A necessary condition for a square-free monomial ideal to have linear resolution is that the induced matching number of the corresponding hypergraph to be one. This fact gives one of the obvious forbidden structures in case of path ideals of graphs: if $J_n(G)$ has linear resolution, then $G$ does not contain the disjoint union of two paths of length $n$, namely $P_n+P_n$, as an induced subgraph. However, for general $n$, this is not the only forbidden structure, even for trees.
     We show that, in the case of trees, there is a family of graphs, which we denote as $L_{n,k}$ where $n\geq 5, k\in [3,n-2]$, are also forbidden to be an induced subgraph of $G$ if $J_n(G)$ has linear resolution. Indeed, to be more precise, we prove the following:

    \begin{theorem}[{Theorem~\ref{thm:main-section-5}}]\label{thm:main}
        Let $G$ be a tree and $n\geq 4$ a positive integer. Then the following statements are equivalent:
        \begin{enumerate}
            \item $J_n(G)$ has linear resolution;
            \item $J_n(G)$ has linear quotients;
            \item either of the following holds:
            \begin{itemize}
                \item[(i)] $n=4$, and  $G$ does not contain $P_4+P_4$ or $L_{5,3}$ as an induced subgraph;
                \item[(ii)] $n\geq 5$, and $G$ does not contain $P_n+P_n$ or $L_{n,k}$ as an induced subgraph for  any $k\in [3,\ (n+1)/2].$
            \end{itemize}
        \end{enumerate}
    \end{theorem}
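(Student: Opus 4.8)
The plan is to prove the cycle of implications $(2)\Rightarrow(1)\Rightarrow(3)\Rightarrow(2)$. Since $J_n(G)$ is generated in the single degree $n$, an ideal with linear quotients has a linear resolution, so $(2)\Rightarrow(1)$ is immediate; note also that if $G$ has no $n$-path then $J_n(G)=0$, every statement holds trivially, and $(3)$ is vacuous, so from now on one may assume $G$ contains an $n$-path.

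For $(1)\Rightarrow(3)$ I would argue by contrapositive: if $G$ contains $P_n+P_n$ or one of the forbidden graphs ($L_{5,3}$ when $n=4$, or $L_{n,k}$ with $k\in[3,(n+1)/2]$ when $n\ge 5$) as an induced subgraph $H$, I will produce an off-linear-strand graded Betti number of $J_n(G)$. The reduction step is that in a tree the vertex set of an $n$-path of $H$ is exactly the vertex set of an $n$-path of $G$ that lies inside $V(H)$; hence $J_n(H)$ is obtained from $J_n(G)$ by restricting the underlying Stanley--Reisner complex to $V(H)$, and therefore $\reg J_n(G)\ge \reg J_n(H)$ by Hochster's formula (the regularity of a Stanley--Reisner ring is monotone under deleting vertices). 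It then suffices to check $\reg J_n(H)>n$ for each forbidden $H$. For $H=P_n+P_n$ this is the standard induced-matching obstruction: $J_n(H)$ is generated by two monomials in disjoint variable sets, so $\beta_{1,2n}(J_n(H))\ne 0$ with $2n>n+1$. For $H=L_{n,k}$ (and for $L_{5,3}$) the ideal $J_n(H)$ has few generators, and I would isolate one nonvanishing $\beta_{i,j}$ with $j>n+i$, either by an explicit iterated mapping-cone built from those generators, or via Hochster's formula by exhibiting a subset $W$ of vertices with $\tilde H_{|W|-i-1}(\Delta_W;\Bbbk)\ne 0$ in a homological degree at least $n-1$.

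The substance of the theorem is $(3)\Rightarrow(2)$. First I would reduce to trees in which every vertex lies on some $n$-path, since a vertex on no $n$-path divides no generator of $J_n(G)$ and appears in no colon ideal among generators, hence is irrelevant to linear quotients. For such reduced trees I would prove a structure theorem: excluding an induced $P_n+P_n$ forces all long subpaths of $G$ to cluster around a single central path (a spine), and excluding the $L_{n,k}$ constrains how the remaining pendant subtrees attach along that spine, giving an explicit finite-type description (a spine decorated by short, controlled pendant subtrees). Given this, I would order the generators of $J_n(G)$, which, $G$ being a tree, are indexed by pairs of vertices at distance $n-1$ with monomial the product over the unique connecting path: order them by position along the spine and then lexicographically, and verify the linear-quotients condition directly, namely that for each earlier $m_{P_j}$ and the current $m_P$ there is an earlier $m_{P_\ell}$ with $|V(P_\ell)\triangle V(P)|=2$ whose extra vertex lies in $V(P_j)$. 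An alternative, likely cleaner, route is an induction via a vertex-splitting recursion $J_n(G)=x_v\,I_1+I_2$ with $I_1=(I_2:x_v)$, obtained by deleting a carefully chosen leaf (or leaf-path), where the exclusion hypotheses guarantee the smaller trees again avoid the forbidden subgraphs and the pieces share a common subpath; this gives linear quotients by induction.

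I expect the main obstacle to be exactly $(3)\Rightarrow(2)$, and within it the interaction of two difficulties: first, making the structure theorem sharp enough to see that the $L_{n,k}$ are \emph{precisely} the minimal trees beyond $P_n+P_n$ that must be forbidden --- i.e.\ that once they are excluded nothing else obstructs --- and second, controlling the colon ideals in the inductive or ordering step when a pendant subtree attaches near the middle of the spine, which is exactly when an $n$-path can differ from an earlier one in a vertex far away in $G$. I anticipate the proof hinging on a linking lemma: under the exclusion hypotheses, any two $n$-paths sharing $n-1$ vertices can be connected through a short chain of $n$-paths lying in a strictly smaller induced subtree, which both powers the linear-quotients order and localizes precisely the configuration that an $L_{n,k}$ would otherwise create.
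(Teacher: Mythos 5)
Your proposal aligns closely with the paper's high-level strategy: same forbidden structures, same reduction to a "spine-structured" tree, and the same lex-type ordering to verify linear quotients. The technical realizations differ, though. For $(1)\Rightarrow(3)$ the paper computes the regularity of $J_n(L_{n,k})$ and $J_4(L_{5,3})$ via Eliahou--Kervaire splittings (Lemmas~\ref{lem:reg-2-gens}, \ref{lem:reg-3-gens}) rather than Hochster's formula or iterated mapping cones; both routes work, but the paper's is more self-contained. For $(3)\Rightarrow(2)$ the paper's key step is the trimming lemma (Lemma~\ref{lem:G->trim(G)}): under the $(F_n)$ hypotheses, $J_n(G)=J_n(\trim(G))$ where $\trim(G)$ is the induced subgraph on the closed neighborhood of a longest path, which is by construction a caterpillar. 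This is sharper than your proposed reduction ``remove vertices not on any $n$-path'' (the trimmed graph can still retain such vertices, e.g.\ $x_8$ in the introduction's example) and, crucially, directly yields the caterpillar structure rather than a structure theorem whose precise shape you leave open. Once $G$ is a caterpillar, the paper orders generators lexicographically with respect to the spine, as you suggest, splitting by diameter into cases $\le 2n-3$, $=2n-2$, $=2n-1$ and in the last two cases exploiting that the $(F_n)$ condition forces $LN_G(x_{n-2})$ (and $LN_G(x_{n+1})$) to be empty. You correctly identify the reduction as the main obstacle, but the mechanism the paper uses is this trimming-to-caterpillar argument, not a linking lemma between $n$-paths.
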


   Some consider path ideals more complicated than other generalizations of edge ideals, e.g., connected ideals. One reason for this is the difficulty of determining $J_n(G)$ for a given $n$ and given $G$. For connected ideals (or edge ideals in particular), more edges in the graph almost always equal more generators for the ideals, a principle that does not hold for path ideals. For example, $J_4(G)=(0)$ for any star graph $G$ (a graph whose edges share a common vertex). The novelty in our approach is that for any $n\geq 4$ and any tree $G$ such that $J_n(G)$ has linear resolution, we can determine an induced subgraph $H$ of $G$ such that $J_n(G)=J_n(H)$ and $J_n(H)$ can be easily computed. We call the operation to obtain $H$ from $G$ \emph{trimming}, which we shall explore in Section~\ref{sec:trimmed trees}. As a preview, let $G$ be the following tree.
   
   	\begin{figure}[!hbt]
		
		\begin{center}
			\begin{tikzpicture}[every node/.style={circle,draw,inner sep=2pt}, every label/.append style={shape= rectangle, minimum height=5mm, align=center}]
				\foreach \i in {1,2,3,4,5,6,7} {
					\node[label={90:$x_{\i}$}] (x\i) at (\i, 0) {};
				}
				\node[label={90:$x_{8}$}] (x8) at (5,-1) {};
                \node[label={90:$x_{9}$}] (x9) at (6,-1) {};

				\draw (x1) -- (x2) -- (x3) -- (x4) -- (x5) -- (x6) -- (x7);
				\draw (x4) -- (x8) -- (x9);
			\end{tikzpicture}
		\end{center}
		\label{fig:intro}
		\caption{The graph $G$}
	\end{figure}
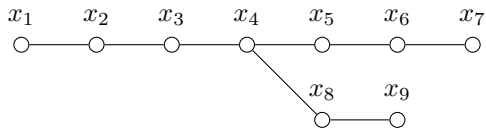

    Let $H$ be the induced subgraph of $G$ with the vertex set $\{x_1,x_2,\dots ,x_8\}$. It can be verified straightforwardly that $J_7(H)=J_7(G)=(x_1x_2x_3x_4x_5x_6x_7)$. In fact, this is the smallest example where the trimming operation results in a proper subgraph of $G$. In other words, the effectiveness of this method can only be observed when one considers $J_n(G)$ when $n\geq 7$.

    This paper is organised as follows. In Section~\ref{sec:preliminaries}, we recall preliminary notions from commutative algebra and graph theory, and prove a few auxiliary results. In Section~\ref{sec:forbidden} we list possible forbidden structures in a tree. Section~\ref{sec:trimmed trees} discusses the trimming operation, and shows that in the absence of the forbidden structures, the trimming operation depends only on the original graph. In Section~\ref{sec:main}, we prove our main result.

    \section*{Acknowledgements}
    Chau and Das are partially supported by a grant from the Infosys Foundation.

	\section{Preliminaries}\label{sec:preliminaries}
    In this section, we recall several notions from graph theory and commutative algebra, and establish a few auxiliary results that are essential in the later parts of this article.
    
	\subsection{Graph terminology}
	
	Let $G=(V(G),E(G))$ be a finite simple graph. A graph $H=(V(H),E(H))$ is called an \emph{induced subgraph} of $G$ if $V(H)\subseteq V(G)$ and $E(H)=\{xy \colon x,y\in V(H) \text{ and }xy\in E(G) \}$. In particular, an induced subgraph of $G$ is uniquely determined by its vertex set. A sequence of vertices $\mathcal{P}\colon x_1,\dots, x_n$ in $G$ is called a \emph{path} from $x_1$ to $x_n$ if these are $n$ distinct vertices and $\{x_i,x_{i+1}\}\in E(G)$ for any $i\in [n-1]$. In this case, $x_1$ and $x_n$ are called two \emph{ends} of $\mathcal{P}$. The \emph{length} of $\mathcal{P}$, denoted by $\length(\mathcal{P})$, is defined to be one less than the number of vertices it contains. For any two graphs $G$ and $H$ that share no common vertex, let $G+H$ denote the \emph{disjoint union} of $G$ and $H$, i.e., the graph with the vertex set $V(G)\cup V(H)$ and the edge set $E(G)\cup E(H)$.
	
	For each $n\geq 3$, we shall use $P_n$ and $C_n$ to denote the path graph and cycle graph on $n$ vertices, respectively. A \emph{tree} is a connected graph with no induced cycle.  Equivalently, a tree is a graph where there is a unique path between any two of its vertices. For this reason, if $G$ is a tree and $x,y$ are two different vertices of $G$, let $\mathcal{P}(x,y)$ denote the (unique) path from $x$ to $y$.
	A longest path in a tree $G$ is called its \emph{diameter}, and we denote its length by $\diam(G)$. We note that in a tree, there can be many longest paths. 
	
	Let $G$ be a finite simple graph. If $xy$ is an edge of $G$, then $x$ is called a \emph{neighbor} of $y$ in $G$, and vice versa. For a vertex $x$ of $G$, let $N_G(x)$ denote the set of all neighbors of $x$ in $G$. The \emph{degree} of $x$ (in $G$) is the number of neighbors of $x$ in $G$. A \emph{leaf vertex} of $G$ is one that is of degree 1.  A \emph{caterpillar tree} is a tree whose vertices with degree at least 2 form a path in $G$; this (unique) path is called the \emph{central path} of $G$.

	\subsection{Linear quotients and linear resolution}
	
	Let $S$ be the polynomial ring $\Bbbk[x_1,\dots, x_m]$ over a field $\Bbbk$, $\mathfrak{m}$ its irrelevant ideal, and $M$ a finitely generated module over $S$. A \emph{free resolution} of $M$ over $S$ is a complex of free $S$-modules
	\[
	\mathcal{F}\colon \cdots \to F_r\xrightarrow{\partial} F_{r-1} \to \cdots \to F_1 \xrightarrow{\partial} F_0\to 0
	\]
	such that $H_0(\mathcal{F})\cong M$ and $H_i(\mathcal{F})=0$ for any $i>0$. Moreover, $\mathcal{F}$ is called \emph{minimal} if $\partial(F_{i+1})\subseteq \mathfrak{m}F_{i}$ for any $i$. It is known that the minimal free resolution of $M$ is finite, i.e., $F_i=0$ if $i\gg 0$.
	
	In the case where $M$ is $\mathbb{N}$-graded, it is well-known that it has an $\mathbb{N}$-graded minimal free resolution. In other words, the free modules $F_i$ can be given a shift so that the differentials are homogeneous. We can thus set $F_i=\oplus_{j\in \mathbb{N}} S(-j)^{\beta_{i,j}(M)}$ for any integer $i$. The numbers $\beta_{i,j}(M)$ are called \emph{Betti numbers} of $M$. The \emph{(Castelnuovo-Mumford) regularity} of $M$, denoted by $\reg M$, is defined to be
	\[
	\reg M\coloneqq \max \{ j - i \colon \beta_{i,j}(M)\neq 0 \}.
	\]
	In this article we will only consider the case where $M=I$ is a monomial ideal of $S$. Recall that $I$ has a unique minimal monomial generating set, which we denote by $\mingens(I)$. If $I$ is generated by monomials in the same degree $d$, we say that $I$ is \emph{equigenerated} in degree $d$. A monomial ideal $I$ equigenerated in degree $d$ is said to have \emph{linear resolution} if $\reg I = d$. We say that $I$ has \emph{linear quotients} if after a relabelling, we have $\mingens(I)=(m_1, m_2,\dots, m_q)$ where the colon ideal
	\[
	(m_1,m_2,\dots, m_k)\colon (m_{k+1}) 
	\]
	is generated by a set of variables of $S$, for any $k\in [q-1]$. The following result is well-known.
	
	\begin{lemma}[{\cite[Theorem 8.2.15]{HerzogHibiBook}}]\label{lem:LQ-implies-LR}
		Equigenerated monomial ideals with linear quotients have linear resolution.
	\end{lemma}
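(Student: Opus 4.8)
The plan is to argue by induction on the number of minimal generators, peeling off one generator at a time and controlling the regularity. Fix an ordering $\mingens(I)=\{m_1,\dots,m_q\}$ witnessing linear quotients, with each $m_i$ of degree $d$, and set $I_k=(m_1,\dots,m_k)$ for $1\le k\le q$, so that $I_q=I$. Since no $m_i$ divides another, $\{m_1,\dots,m_k\}$ is the minimal generating set of $I_k$, so each $I_k$ is equigenerated in degree $d$; in particular $\beta_{0,d}(I_k)\ne 0$ and hence $\reg(S/I_k)\ge d-1$. I would prove by induction on $k$ that $\reg(S/I_k)=d-1$, which for $k=q$ is exactly the assertion that $I$ has a linear resolution. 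The base case $k=1$ is clear, since $S/(m_1)$ is resolved by $0\to S(-d)\xrightarrow{m_1}S\to S/(m_1)\to 0$.

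For the inductive step, suppose $\reg(S/I_k)=d-1$. Multiplication by $m_{k+1}$ yields the short exact sequence
\[
0\longrightarrow\bigl(S/(I_k:m_{k+1})\bigr)(-d)\xrightarrow{\ \cdot\, m_{k+1}\ }S/I_k\longrightarrow S/I_{k+1}\longrightarrow 0.
\]
By the linear quotients hypothesis, $(I_k:m_{k+1})$ is generated by a subset of the variables, so $S/(I_k:m_{k+1})$ is resolved by the corresponding Koszul complex and therefore has regularity $0$; after the twist by $d$, the left-hand term has regularity exactly $d$. Plugging the two outer terms into the standard regularity estimate attached to this short exact sequence gives
\[
\reg(S/I_{k+1})\le\max\bigl\{\reg(S/I_k),\ \reg\bigl((S/(I_k:m_{k+1}))(-d)\bigr)-1\bigr\}=\max\{d-1,\,d-1\}=d-1,
\]
and combined with the inequality $\reg(S/I_{k+1})\ge d-1$ observed above this forces $\reg(S/I_{k+1})=d-1$, closing the induction.

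The one point that needs care is the regularity bookkeeping across the short exact sequence: one must use the inequality $\reg(S/I_{k+1})\le\max\{\reg(S/I_k),\,\reg(A)-1\}$ with $A$ the submodule (the shift by $-1$ is essential), and check that the degree-$d$ twist on $S/(I_k:m_{k+1})$ is precisely what makes the bound come out to $d-1$ rather than $d$. An equivalent and more explicit route avoids regularity estimates altogether: lift the comparison map to a morphism of minimal free resolutions and form the iterated mapping cone at each step. Because every colon $(I_k:m_{k+1})$ is generated by variables, each Koszul resolution involved is linear with exactly the right internal shift, and one checks inductively that the resulting mapping cone is a (minimal) $d$-linear free resolution of $I_{k+1}$; this is essentially the argument in \cite[Chapter 8]{HerzogHibiBook}. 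Either way, iterating from $I_1$ up to $I_q=I$ completes the proof.
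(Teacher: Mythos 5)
Your argument is correct. Note that the paper does not actually prove this statement: it invokes it directly as \cite[Theorem~8.2.15]{HerzogHibiBook}, so there is no in-paper proof to compare against. What you give is a standard and complete proof. Your primary route (inducting on the number of generators, using the short exact sequence
\[
0 \longrightarrow \bigl(S/(I_k:m_{k+1})\bigr)(-d) \longrightarrow S/I_k \longrightarrow S/I_{k+1} \longrightarrow 0
\]
together with the regularity bound $\reg(C)\le\max\{\reg(A)-1,\reg(B)\}$) is a clean variant; it correctly exploits that a colon ideal generated by variables has a Koszul resolution, hence regularity $0$ before the twist, and the degree-$d$ twist and the $-1$ in the bound cancel exactly to give $d-1$. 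One minor point worth stating explicitly: the colon $(I_k:m_{k+1})$ is a \emph{proper} nonzero ideal (it cannot be the unit ideal since all $m_i$ have the same degree $d$ and are distinct, so no $m_i$ divides $m_{k+1}$; and it cannot be zero since $I_k\ne 0$), so $S/(I_k:m_{k+1})$ is genuinely a polynomial ring in fewer variables with regularity $0$. The mapping-cone argument you sketch as an alternative is the one actually found in Herzog--Hibi: it is more informative (it exhibits the linear resolution explicitly and yields the Betti number formula $\beta_{i}(I)=\sum_k\binom{|\mathrm{set}(m_k)|}{i}$), whereas your regularity-bookkeeping version is shorter when one only needs the qualitative conclusion. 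Either is a legitimate substitute for the citation.
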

	
	For a monomial ideal $I$ and a monomial $m$, let $I^{\leq m}$ be the monomial ideal generated by monomials in $\mingens(I)$ that divide $m$. The following is a direct corollary of the well-known Restriction Lemma.
	
	\begin{lemma}[{\cite[Lemma 4.1]{HHZ2004}}]\label{lem:linear-resolution-induced}
		If a monomial ideal $I$ has linear resolution, so does $I^{\leq m}$ for any monomial $m$.
	\end{lemma}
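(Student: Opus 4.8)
The plan is to realize the statement as the ``direct corollary of the Restriction Lemma'' it advertises, so that the only real work is to identify $I^{\leq m}$ with a suitable \emph{restriction} of $I$. Write $d$ for the common degree of the generators of $I$; this is legitimate since an ideal with a linear resolution is equigenerated. As $\mingens(I^{\leq m})\subseteq\mingens(I)$, the ideal $J:=I^{\leq m}$ is either zero, for which there is nothing to prove, or equigenerated in the same degree $d$. So it suffices to show $\beta_{i,j}(J)=0$ whenever $j\neq i+d$.

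The form of the Restriction Lemma I would invoke is this: if $L$ is a \emph{squarefree} monomial ideal and $W$ is a set of variables, then the ideal $L_W$ generated by those elements of $\mingens(L)$ supported on $W$ --- equivalently, the image of $L$ under the substitution killing the variables outside $W$ --- satisfies $\beta_{i,j}(L_W)\leq\beta_{i,j}(L)$ for all $i,j$. This is \cite[Lemma 4.1]{HHZ2004}, and in any case it is immediate from Hochster's formula, since every nonnegative term computing $\beta_{i,j}(L_W)$ already appears among those computing $\beta_{i,j}(L)$. In particular, if $L$ has a $d$-linear resolution then so does every nonzero $L_W$, the latter being equigenerated in degree $d$ as its generators form a subset of $\mingens(L)$.

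To obtain the general statement I would reduce to the squarefree setting by polarization. Polarization is defined generator by generator, sends $I$ and $J$ to squarefree ideals $I^{\mathrm{pol}}$ and $J^{\mathrm{pol}}$ with the same graded Betti numbers --- so $I^{\mathrm{pol}}$ again has a $d$-linear resolution and it is enough to prove the desired vanishing for $J^{\mathrm{pol}}$ --- and, being generator-wise, is compatible with passage to a sub-ideal: if $m=\prod_i x_i^{c_i}$ and $W$ is the set consisting of the first $c_i$ polarization variables attached to each $x_i$, then a polarized generator $g^{\mathrm{pol}}$ is supported on $W$ exactly when $g\mid m$, so that $J^{\mathrm{pol}}=(I^{\mathrm{pol}})_W$ in the notation of the previous paragraph. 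That paragraph then yields that $J^{\mathrm{pol}}=(I^{\mathrm{pol}})_W$ has a $d$-linear resolution, and hence so does $J$. I do not anticipate any genuine obstacle beyond the Restriction Lemma itself, which we are quoting; the one point needing care is the generator-wise bookkeeping in the polarization step. (A polarization-free variant runs as follows: a nonzero $\beta_{i,\mathbf a}(J)$ forces $x^{\mathbf a}$ to be an $\lcm$ of generators of $J$, hence to divide $m$; for such $\mathbf a$ the generators of $I$ and of $J$ dividing $x^{\mathbf a}$ coincide, so $\beta_{i,\mathbf a}(J)=\beta_{i,\mathbf a}(I)$ because a multigraded Betti number in multidegree $\mathbf a$ depends only on the generators dividing $x^{\mathbf a}$ --- once more a form of the Restriction Lemma, e.g., via the lcm-lattice description of multigraded Betti numbers --- and linearity of $I$ concludes.)
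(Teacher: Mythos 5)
Your proof is correct. Since the paper treats this statement as a cited corollary of the Restriction Lemma \cite[Lemma 4.1]{HHZ2004} and supplies no argument of its own, there is no paper proof to compare against --- but your reconstruction is valid and is the standard way to obtain the result. The bookkeeping in the polarization step is right: if $m=\prod_i x_i^{c_i}$ and $W$ is the set of the first $c_i$ polarization variables over $x_i$, then a polarized generator $g^{\mathrm{pol}}$ is supported on $W$ precisely when $g\mid m$, so $J^{\mathrm{pol}}=(I^{\mathrm{pol}})_W$ and the squarefree Restriction Lemma applies; the fact that $J^{\mathrm{pol}}$ may live most naturally in a smaller polarization ring than $I^{\mathrm{pol}}$ is harmless since Betti numbers are unchanged under flat polynomial extension. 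The polarization-free variant you sketch at the end is, if anything, the cleaner route and is closer to how the Restriction Lemma is proved for arbitrary monomial ideals: a multidegree $\mathbf a$ with $\beta_{i,\mathbf a}(J)\neq 0$ lies in the lcm-lattice of $J$, hence $x^{\mathbf a}\mid m$; for such $\mathbf a$ the generators of $I$ and $J$ dividing $x^{\mathbf a}$ agree, so $\beta_{i,\mathbf a}(J)=\beta_{i,\mathbf a}(I)$, and linearity of $I$ finishes. Either path is sound; you might just record the observation that if $J=(0)$ the statement is vacuous, which you already do.
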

	
	In general, an analog of the Restriction Lemma for the property of having linear quotients does not hold. However, it does when the ideal is equigenerated.
	
	\begin{lemma}[{\cite[Proposition 2.6]{HMRG20}}]\label{lem:linear-quotients-restriction}
		If an equigenerated monomial ideal $I$ has linear quotients, then so does $I^{\leq m}$ for any monomial $m$.
	\end{lemma}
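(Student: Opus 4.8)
Fix an ordering $m_1,\dots,m_q$ of $\mingens(I)$ realizing linear quotients for $I$. The plan is to show that the \emph{induced} ordering on the sublist of those $m_i$ that divide $m$ realizes linear quotients for $I^{\leq m}$. Two elementary observations come first. Since $\mingens(I)$ is an antichain under divisibility, so is every subset of it; hence $\mingens(I^{\leq m})=\{\,g\in\mingens(I):g\mid m\,\}$. Second, for any monomial ideal one has $(m_1,\dots,m_{k-1}):(m_k)=\bigl(\,m_j/\gcd(m_j,m_k)\ :\ 1\le j\le k-1\,\bigr)$, and \emph{this ideal is generated by variables iff} for every $j<k$ there are $\ell<k$ and a variable $x$ with $m_\ell/\gcd(m_\ell,m_k)=x$ and $x\mid m_j/\gcd(m_j,m_k)$; here one uses that distinct minimal generators never divide each other, so $m_\ell/\gcd(m_\ell,m_k)\neq 1$.

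\textbf{Reduction.} Write $\{\,i:m_i\mid m\,\}=\{i_1<i_2<\cdots<i_t\}$, so that $\mingens(I^{\leq m})=\{m_{i_1},\dots,m_{i_t}\}$ by the first observation. By the second observation, it suffices to prove: for all $1\le a<b\le t$ there are $c<b$ and a variable $x$ with $m_{i_c}/\gcd(m_{i_c},m_{i_b})=x$ and $x\mid m_{i_a}/\gcd(m_{i_a},m_{i_b})$.

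\textbf{Main step.} Fix $a<b$. Applying the linear quotients of $I$ to the index $k=i_b$ and the index $j=i_a<i_b$, we obtain $\ell<i_b$ and a variable $x$ with $m_\ell/\gcd(m_\ell,m_{i_b})=x$ and $x\mid m_{i_a}/\gcd(m_{i_a},m_{i_b})$. The crux is that $\ell$ automatically lies among $i_1,\dots,i_t$, i.e. $m_\ell\mid m$. From $m_\ell/\gcd(m_\ell,m_{i_b})=x$ we read off $\deg_y(m_\ell)\le\deg_y(m_{i_b})$ for every variable $y\neq x$ and $\deg_x(m_\ell)=\deg_x(m_{i_b})+1$; from $x\mid m_{i_a}/\gcd(m_{i_a},m_{i_b})$ we read off $\deg_x(m_{i_a})\ge\deg_x(m_{i_b})+1$. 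Hence, using $m_{i_b}\mid m$ for $y\ne x$ and $m_{i_a}\mid m$ for $y=x$,
\[
\deg_y(m_\ell)\le\deg_y(m_{i_b})\le\deg_y(m)\ \ (y\ne x),\qquad \deg_x(m_\ell)=\deg_x(m_{i_b})+1\le\deg_x(m_{i_a})\le\deg_x(m),
\]
so $m_\ell\mid m$. Therefore $\ell=i_c$ for some $c$, and $\ell<i_b$ forces $c<b$; finally $m_{i_c}/\gcd(m_{i_c},m_{i_b})=m_\ell/\gcd(m_\ell,m_{i_b})=x$ divides $m_{i_a}/\gcd(m_{i_a},m_{i_b})$. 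This is exactly the required condition, completing the proof.

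\textbf{Where the difficulty sits.} The only nonroutine point is the claim that the witness $m_\ell$ supplied by the linear quotients of $I$ already divides $m$; once the exponent bookkeeping is set up it falls out immediately, but it genuinely relies on the fact that \emph{both} generators $m_{i_a}$ and $m_{i_b}$ appearing in the colon relation divide $m$ (equigeneration makes the picture even cleaner, since then $m_\ell$ and $m_{i_b}$ have equal degree and so differ only in the exponent of $x$, but the inequality $\deg_x(m_\ell)\le\deg_x(m_{i_a})$ is what actually carries the argument).
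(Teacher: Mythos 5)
Your proof is correct. The paper does not actually prove this lemma; it quotes it from [HMRG20, Proposition 2.6], so there is no in-paper argument to compare against. Your approach---restrict the linear-quotients ordering on $\mingens(I)$ to the sublist of generators dividing $m$, then verify by exponent bookkeeping that the variable-witness $m_\ell$ supplied by linear quotients of $I$ automatically divides $m$---is the natural argument and is carried out correctly: the characterization of ``colon ideal generated by variables'' is right, the reduction is right, and the inequalities $\deg_y(m_\ell)\le\deg_y(m_{i_b})\le\deg_y(m)$ for $y\ne x$ together with $\deg_x(m_\ell)=\deg_x(m_{i_b})+1\le\deg_x(m_{i_a})\le\deg_x(m)$ indeed give $m_\ell\mid m$. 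Your closing remark is also accurate: equigeneration is never used, since $m_\ell\mid m$ follows purely from $m_{i_b}\mid m$ (for $y\ne x$) and $m_{i_a}\mid m$ (for $y=x$), so the argument in fact establishes the statement for arbitrary monomial ideals with linear quotients, not just equigenerated ones.
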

	
	Our focus is on the class of $n$-path ideals $J_n(G)$ where $n\geq 2$ is an integer. It is clear from definition that if $H$ is an induced subgraph of $G$, then $J_n(G)^{\leq \prod_{x\in V(H)} x} = J_n(H)$. Thus the following follows immediately from Lemma~\ref{lem:linear-quotients-restriction}.
	
	\begin{lemma}\label{lem:linear-quotients-induced}
		Let $G$ be a graph and $n\geq 2$ a positive integer. If $J_n(G)$ has linear quotients, then so does $J_n(H)$ for any induced subgraph $H$ of $G$.
	\end{lemma}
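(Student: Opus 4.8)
The plan is to realise $J_n(H)$ as a ``divisibility restriction'' of $J_n(G)$ and then quote Lemma~\ref{lem:linear-quotients-restriction}. First I would note that $J_n(G)$ is equigenerated in degree $n$: by definition every element of $\mingens(J_n(G))$ is the squarefree degree-$n$ monomial $\prod_{j=1}^{n}x_{i_j}$ associated to an $n$-path $x_{i_1},\dots,x_{i_n}$ of $G$. (If $J_n(G)=(0)$ there is nothing to prove, since then $J_n(H)=(0)$ as well.) Being equigenerated is exactly the hypothesis under which the restriction lemma for linear quotients is available.

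Next I would take $m\coloneqq\prod_{x\in V(H)}x$ and check that $J_n(G)^{\leq m}=J_n(H)$. A generator $\prod_{j=1}^{n}x_{i_j}$ of $J_n(G)$ divides $m$ precisely when all the vertices $x_{i_1},\dots,x_{i_n}$ lie in $V(H)$; and because $H$ is an \emph{induced} subgraph, a sequence of vertices from $V(H)$ forms an $n$-path in $G$ if and only if it forms an $n$-path in $H$ (the edges of $G$ and of $H$ among vertices of $V(H)$ coincide). Hence the minimal generators of $J_n(G)$ that divide $m$ are exactly the minimal generators of $J_n(H)$, so the two ideals are equal. Applying Lemma~\ref{lem:linear-quotients-restriction} with this $m$ to the equigenerated ideal $J_n(G)$, which has linear quotients by hypothesis, then yields that $J_n(G)^{\leq m}=J_n(H)$ has linear quotients.

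There is essentially no obstacle in this argument; the single point that needs attention is the induced-subgraph hypothesis, which is precisely what guarantees that passing from $G$ to $V(H)$ neither creates nor destroys $n$-paths, and hence what makes the identity $J_n(G)^{\leq m}=J_n(H)$ valid. Without it one would only have an inclusion of generating sets, and the conclusion could fail.
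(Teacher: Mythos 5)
Your proof is correct and follows essentially the same route as the paper: identify $J_n(H)$ as $J_n(G)^{\leq m}$ for $m=\prod_{x\in V(H)}x$ (which the paper declares ``clear from definition'') and then invoke Lemma~\ref{lem:linear-quotients-restriction}. You simply spell out two points the paper leaves implicit --- that $J_n(G)$ is equigenerated (as required by that lemma) and that the induced-subgraph hypothesis is exactly what makes the equality $J_n(G)^{\leq m}=J_n(H)$ hold.
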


	\subsection{Regularity for some monomial ideals}
	
	In view of Lemma~\ref{lem:LQ-implies-LR}, our main tool in showing that a monomial ideal does not have linear quotients, is to show that it does not have linear resolution. For this reason, we compute the regularity of monomial ideals in some special cases in this section.
	
	First we recall the concept of Eliahou-Kervaire splittings: a decomposition $I=J+K$ of a monomial ideal into the sum of two monomial ideals is called an \emph{Eliahou-Kervaire splitting} if $\mingens(I)$ is the disjoint union of $\mingens(J)$ and $\mingens(K)$  and there exists a function
	\begin{align*}
		\phi\colon \mingens(J\cap K) &\to \mingens(J)\times \mingens(K)\\
		m &\mapsto (\phi_1(m),\phi_2(m))
	\end{align*} such that the following holds:
	\begin{enumerate}
		\item for any $m\in \mingens(J\cap K)$, we have $m=\lcm(\phi_1(m),\phi_2(m))$;
		\item for any non-empty subset $\sigma\subseteq \mingens(J\cap K)$, the monomials $\lcm(\phi_1(\sigma))$ and $\lcm(\phi_2(\sigma))$ properly divide $\lcm(\sigma)$.
	\end{enumerate}
	The function $\phi$ in this case is called a \emph{splitting map} of $I$. By \cite[Proposition 3.2]{Fatabbi2001}, if $I=J+K$ is an Eliahou-Kervaire splitting, then 
	\[
	\beta_{i,j}(I)=\beta_{i,j}(J)+\beta_{i,j}(K)+\beta_{i-1,j}(J\cap K) \;\;\;\text{ for any integers } i,j.
	\]
	By definition, we have the following (see also, \cite[Corollary 2.2]{FHV09}).
	
	\begin{lemma}\label{lem:regularity-EK-splitting}
		If $I=J+K$ is an Eliahou-Kervaire splitting, then
		\[
		\reg I = \max \{\reg J, \ \reg K, \ \reg(J\cap K) -1 \}.
		\]
	\end{lemma}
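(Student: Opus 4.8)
The plan is to derive this directly from the Eliahou--Kervaire splitting formula for Betti numbers quoted just above, namely
\[
\beta_{i,j}(I)=\beta_{i,j}(J)+\beta_{i,j}(K)+\beta_{i-1,j}(J\cap K)\quad\text{for all }i,j.
\]
First I would recall that for any monomial ideal $M$, $\reg M=\max\{j-i:\beta_{i,j}(M)\neq 0\}$, so it suffices to track which pairs $(i,j)$ with $j-i$ large can have a nonzero Betti number for $I$. Since the three terms on the right-hand side are nonnegative, $\beta_{i,j}(I)\neq 0$ forces at least one of $\beta_{i,j}(J)$, $\beta_{i,j}(K)$, $\beta_{i-1,j}(J\cap K)$ to be nonzero. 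In the first case $j-i\leq \reg J$; in the second $j-i\leq \reg K$; in the third, writing $i'=i-1$, we have $\beta_{i',j}(J\cap K)\neq 0$, so $j-i=j-i'-1\leq \reg(J\cap K)-1$. This proves $\reg I\leq \max\{\reg J,\ \reg K,\ \reg(J\cap K)-1\}$.

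For the reverse inequality I would exhibit, for each of the three quantities, a pair $(i,j)$ realizing it as $j-i$ with $\beta_{i,j}(I)\neq 0$. If $j-i=\reg J$ with $\beta_{i,j}(J)\neq 0$, then since $\beta_{i,j}(K)$ and $\beta_{i-1,j}(J\cap K)$ are both nonnegative, the splitting formula gives $\beta_{i,j}(I)\geq \beta_{i,j}(J)>0$, so $\reg I\geq j-i=\reg J$; the same argument with $K$ in place of $J$ gives $\reg I\geq \reg K$. For the third term, pick $(i',j)$ with $\beta_{i',j}(J\cap K)\neq 0$ and $j-i'=\reg(J\cap K)$, and set $i=i'+1$; then $\beta_{i,j}(I)\geq \beta_{i-1,j}(J\cap K)>0$, whence $\reg I\geq j-i=j-i'-1=\reg(J\cap K)-1$. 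Combining the three lower bounds yields $\reg I\geq \max\{\reg J,\ \reg K,\ \reg(J\cap K)-1\}$, and together with the upper bound this gives equality.

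There is essentially no obstacle here: the only point requiring a word of care is that one should note $\reg(J\cap K)-1$ could a priori be less than $\reg J$ and $\reg K$ (or even nonpositive if $J\cap K$ has no syzygies beyond generators), but since we are taking a maximum of all three this causes no trouble, and $J$, $K$ are nonzero so $\reg J,\reg K\geq 0$ keep the maximum well-defined and positive. I would simply cite \cite[Proposition 3.2]{Fatabbi2001} for the Betti-number identity and present the two inequalities as above; this is the content of \cite[Corollary 2.2]{FHV09}, to which the reader is already pointed.
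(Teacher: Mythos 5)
Your proof is correct and spells out exactly what the paper leaves implicit: the paper quotes the Eliahou--Kervaire Betti number formula from \cite[Proposition 3.2]{Fatabbi2001} and then states the regularity formula follows ``by definition'' (pointing to \cite[Corollary 2.2]{FHV09}), and your two-inequality argument using the nonnegativity of graded Betti numbers is precisely that omitted computation. Same approach, just written out in full.
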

	
	We are now ready to compute the regularity of the following special monomial ideals. We remark that these results are not particularly new, and can be derived from various methods. For example, Lyubeznik resolutions \cite{Ly88} minimally resolve all the monomial ideals below. Instead here we offer short and self-contained proofs for these results using Eliahou-Kervaire splittings.
	
	\begin{lemma}\label{lem:reg-2-gens}
		If $I$ is a monomial ideal where $\mingens(I)=\{m_1,m_2\}$, then 
        \[
        \reg I = \deg (\lcm(m_1,m_2)) -~1.
        \]
	\end{lemma}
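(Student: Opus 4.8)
The plan is to realize $I$ as an Eliahou–Kervaire splitting and then invoke Lemma~\ref{lem:regularity-EK-splitting}. Set $J=(m_1)$ and $K=(m_2)$. Since $\mingens(I)=\{m_1,m_2\}$ is by hypothesis a \emph{minimal} generating set, $m_1\neq m_2$ and neither of $m_1,m_2$ divides the other; in particular $\mingens(I)$ is the disjoint union $\mingens(J)\sqcup\mingens(K)=\{m_1\}\sqcup\{m_2\}$. The intersection $J\cap K=(m_1)\cap(m_2)$ is the principal ideal $(\lcm(m_1,m_2))$, so $\mingens(J\cap K)=\{\lcm(m_1,m_2)\}$. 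Define the splitting map $\phi$ on the single element $\lcm(m_1,m_2)$ by $\phi(\lcm(m_1,m_2))=(m_1,m_2)$. Condition~(1) holds trivially since $\lcm(m_1,m_2)=\lcm(\phi_1,\phi_2)$, and condition~(2), which here only needs to be checked on the full set $\sigma=\{\lcm(m_1,m_2)\}$, amounts to saying that $m_1$ and $m_2$ each \emph{properly} divide $\lcm(m_1,m_2)$; properness is precisely where the minimality of the generating set is used, since $m_i=\lcm(m_1,m_2)$ would force the other generator to divide $m_i$. Hence $I=J+K$ is an Eliahou–Kervaire splitting.

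Next I would record the three regularities feeding into Lemma~\ref{lem:regularity-EK-splitting}. A principal monomial ideal $(m)$ has minimal free resolution $0\to S(-\deg m)\to (m)\to 0$, so $\reg (m)=\deg m$; applying this to $J$, $K$, and $J\cap K$ gives $\reg J=\deg m_1$, $\reg K=\deg m_2$, and $\reg(J\cap K)=\deg(\lcm(m_1,m_2))$. Lemma~\ref{lem:regularity-EK-splitting} then yields
\[
\reg I=\max\bigl\{\deg m_1,\ \deg m_2,\ \deg(\lcm(m_1,m_2))-1\bigr\}.
\]
Finally, since $m_1$ properly divides $\lcm(m_1,m_2)$ we have $\deg m_1\leq \deg(\lcm(m_1,m_2))-1$, and symmetrically $\deg m_2\leq \deg(\lcm(m_1,m_2))-1$, so the last term dominates and $\reg I=\deg(\lcm(m_1,m_2))-1$.

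I expect no serious obstacle here: the whole argument is a one-element instance of the Eliahou–Kervaire machinery. The only point requiring care is the properness in condition~(2) of the splitting, which rests entirely on the hypothesis that $\{m_1,m_2\}$ is a minimal generating set (equivalently, that $m_1\nmid m_2$ and $m_2\nmid m_1$); this also rules out the degenerate case $m_1=m_2$, in which $\mingens(I)$ would be a singleton and the stated formula would fail.
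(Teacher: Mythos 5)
Your argument is the same as the paper's: set $J=(m_1)$, $K=(m_2)$, note $J\cap K=(\lcm(m_1,m_2))$, take the splitting map $\lcm(m_1,m_2)\mapsto(m_1,m_2)$, check the Eliahou--Kervaire conditions using that neither $m_i$ divides the other, and then apply Lemma~\ref{lem:regularity-EK-splitting} together with the observation that the $\lcm$ term dominates. The write-up is correct and matches the paper's proof step for step; the only (harmless) addition is your explicit remark that properness in condition~(2) is exactly where minimality of the generating set enters.
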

	
	\begin{proof}
		Set $J=(m_1)$ and $K=(m_2)$. Then $J\cap K=(\lcm(m_1,m_2))$, and it is clear that $\mingens(J\cap K)=\{\lcm(m_1,m_2)\}$. Consider the map
		\begin{align*}
			\phi\colon \mingens(J\cap K) &\to \mingens(J)\times \mingens(K)\\
			\lcm(m_1,m_2) &\mapsto (m_1,m_2).
		\end{align*}
		We claim that $\phi$ is an Eliahou-Kervaire splitting. The only non-trivial condition we need to check is (2). Consider a non-empty subset $\sigma$ of $\mingens(J\cap K)$. Then we have $\sigma = \mingens(J\cap K)$ itself. Since $\mingens(I)=\{m_1,m_2\}$, these two monomials do not divide each other. In particular, $m_1$ and $m_2$ properly divides $\lcm(\sigma)=\lcm(m_1,m_2)$. Thus the claims holds. By Lemma~\ref{lem:regularity-EK-splitting}, we then have
		\begin{align*}
			\reg I &= \max \{\reg J, \ \reg K, \ \reg(J\cap K) -1 \}\\
			&= \max \{\deg(m_1), \ \deg(m_2), \ \deg(\lcm(m_1,m_2)) -1 \}.
		\end{align*}
		Since $m_1$ and $m_2$ do not divide each other, $\deg(\lcm(m_1,m_2))$ is bigger than $\deg(m_1)$ and $\deg(m_2)$. The result then follows.
	\end{proof}

	\begin{lemma}\label{lem:reg-3-gens}
		If $I$ is a monomial ideal with $\mingens(I)=\{m_1,m_2,m_3\}$ where $m_3\mid \lcm(m_1,m_2)$, then 
		\[
		\reg I = \max\{ \deg \lcm(m_1,m_3), \deg \lcm(m_2,m_3) \} -1.
		\]
	\end{lemma}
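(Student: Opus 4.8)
The plan is to exhibit an Eliahou--Kervaire splitting of $I$, in the spirit of the proof of Lemma~\ref{lem:reg-2-gens}, and then apply Lemma~\ref{lem:regularity-EK-splitting}. The naive choice $J=(m_1,m_2)$, $K=(m_3)$ does \emph{not} work in general: when $\lcm(m_1,m_3)$ and $\lcm(m_2,m_3)$ are incomparable one gets $\mingens(J\cap K)=\{\lcm(m_1,m_3),\lcm(m_2,m_3)\}$, and on the full set $\sigma=\mingens(J\cap K)$ the monomial $\lcm(\phi_1(\sigma))=\lcm(m_1,m_2)$ fails to \emph{properly} divide $\lcm(\sigma)=\lcm(m_1,m_2,m_3)=\lcm(m_1,m_2)$, violating condition~(2). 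Instead I would use the asymmetric split $J=(m_1)$ and $K=(m_2,m_3)$. Since $\{m_1,m_2,m_3\}$ is a minimal generating set, $\mingens(J)=\{m_1\}$, $\mingens(K)=\{m_2,m_3\}$, these are disjoint, and their union is $\mingens(I)$, so only the existence of a splitting map needs to be checked.

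The key observation is that the hypothesis $m_3\mid\lcm(m_1,m_2)$ forces $\lcm(m_1,m_3)\mid\lcm(m_1,m_2)$: comparing exponents of each variable,
\[
\deg_{x_i}(\lcm(m_1,m_3))=\max\big(\deg_{x_i}(m_1),\deg_{x_i}(m_3)\big)\le\max\big(\deg_{x_i}(m_1),\deg_{x_i}(m_2)\big)=\deg_{x_i}(\lcm(m_1,m_2)).
\]
Hence
\[
J\cap K=(\lcm(m_1,m_2),\ \lcm(m_1,m_3))=(\lcm(m_1,m_3)),
\]
a principal ideal. I would then set $\phi(\lcm(m_1,m_3))=(m_1,m_3)$. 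Condition~(1) is immediate, and condition~(2), for the only non-empty $\sigma$ (the singleton), reduces to the statement that $m_1$ and $m_3$ each properly divide $\lcm(m_1,m_3)$, which holds since $m_3\nmid m_1$ and $m_1\nmid m_3$ by minimality. So $I=J+K$ is an Eliahou--Kervaire splitting.

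Finally, Lemma~\ref{lem:regularity-EK-splitting} gives $\reg I=\max\{\reg J,\ \reg K,\ \reg(J\cap K)-1\}$. Both $J$ and $J\cap K$ are principal, so $\reg J=\deg m_1$ and $\reg(J\cap K)=\deg\lcm(m_1,m_3)$, while $\reg K=\deg\lcm(m_2,m_3)-1$ by Lemma~\ref{lem:reg-2-gens}. Since $m_3\nmid m_1$ we have $\deg m_1<\deg\lcm(m_1,m_3)$, so the term $\deg m_1$ is dominated by $\deg\lcm(m_1,m_3)-1$, leaving $\reg I=\max\{\deg\lcm(m_1,m_3),\ \deg\lcm(m_2,m_3)\}-1$, as claimed. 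The one genuinely non-routine point is recognising that the symmetric-looking hypothesis calls for the asymmetric split $(m_1)+(m_2,m_3)$, whose intersection then collapses to a principal ideal; the remaining steps are bookkeeping.
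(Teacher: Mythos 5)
Your proof is correct and essentially the same as the paper's: both construct an Eliahou--Kervaire splitting in which $m_3$ is grouped with one of the other two generators (you take $J=(m_1)$, $K=(m_2,m_3)$; the paper takes $J=(m_1,m_3)$, $K=(m_2)$, which is the same decomposition after swapping the symmetric roles of $m_1$ and $m_2$), use the hypothesis $m_3\mid\lcm(m_1,m_2)$ to collapse $J\cap K$ to a principal ideal, and then apply Lemmas~\ref{lem:regularity-EK-splitting} and~\ref{lem:reg-2-gens}. Your remark about why the symmetric split $(m_1,m_2)+(m_3)$ fails is a correct and useful sanity check, though not needed for the argument.
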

	
	\begin{proof}
		Set $J=(m_1,m_3)$ and $K=(m_2)$. Then
		\[
		J\cap K = (\lcm(m_1,m_2),\lcm(m_2,m_3)) = (\lcm(m_2,m_3))
		\]
		as $m_3\mid \lcm(m_1,m_2)$. Consider the map
		\begin{align*}
			\phi\colon \mingens(J\cap K) &\to \mingens(J)\times \mingens(K)\\
			\lcm(m_2,m_3) &\mapsto (m_3,m_2).
		\end{align*}
		The fact that $\phi$ is an Eliahou-Kervaire splitting follows from similar arguments in the proof of Lemma~\ref{lem:reg-2-gens}. By Lemmas~\ref{lem:regularity-EK-splitting} and \ref{lem:reg-2-gens}, we then have
		\begin{align*}
			\reg I &= \max \{\reg J, \ \reg K, \ \reg(J\cap K) -1 \}\\
			&= \max \{\deg(\lcm(m_1,m_3)) -1, \ \deg(m_2), \ \deg(\lcm(m_2,m_3)) -1 \}\\
			&= \max \{\deg(\lcm(m_1,m_3)) -1, \ \deg(\lcm(m_2,m_3)) -1 \},
		\end{align*}
		where the last equality is due to $m_2$ and $m_3$ do not divide each other. The result then follows.
	\end{proof}

\section{Forbidden structures}\label{sec:forbidden}
	
	In this section, we identify certain classes of graphs that are obstructions to the linearity of the minimal free resolution of path ideals. It is clear (from different methods) that if a graph $G$ contains $P_n+P_n$ as an induced subgraph, then $J_n(G)$ does not have linear resolution.
	\begin{lemma}\label{lem:forbidden-Pn+Pn}
		Let $n\geq 2$ be a positive integer. Then $J_n(P_n+P_n)$ does not have linear resolution.
	\end{lemma}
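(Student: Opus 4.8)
The plan is to observe that $P_n + P_n$ contains exactly two $n$-paths. Indeed, $P_n$ has exactly $n$ vertices, so any path on $n$ vertices in $P_n$ must use all of them; the unique such path (traversed in either direction gives the same vertex set) is the only $n$-path of $P_n$, and hence $P_n + P_n$ has precisely two, one in each connected component. Writing $P_n + P_n$ with vertex set $\{x_1,\dots,x_n\}\sqcup\{y_1,\dots,y_n\}$ and edges $x_ix_{i+1}$, $y_iy_{i+1}$ for $i\in[n-1]$, we obtain
\[
J_n(P_n + P_n) = \bigl(x_1x_2\cdots x_n,\ y_1y_2\cdots y_n\bigr) = (m_1,m_2),
\]
where $m_1=\prod_{i=1}^n x_i$ and $m_2=\prod_{i=1}^n y_i$ are square-free monomials of degree $n$ supported on disjoint sets of variables. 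In particular $J_n(P_n+P_n)$ is equigenerated in degree $n$ and $\mingens(J_n(P_n+P_n))=\{m_1,m_2\}$.

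Next I would apply Lemma~\ref{lem:reg-2-gens} to this two-generated ideal, which gives
\[
\reg J_n(P_n + P_n) = \deg\bigl(\lcm(m_1,m_2)\bigr) - 1.
\]
Since $m_1$ and $m_2$ involve disjoint variables, $\lcm(m_1,m_2)=m_1m_2$ has degree $2n$, so $\reg J_n(P_n+P_n)=2n-1$.

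Finally, an equigenerated ideal in degree $n$ has linear resolution precisely when its regularity equals $n$. As $2n-1 > n$ for every $n\geq 2$, this fails, which completes the argument. There is essentially no obstacle here: the only step deserving a moment's care is the count of $n$-paths, which is immediate once one notes $|V(P_n)|=n$. One could instead quote the induced-matching-number obstruction mentioned in the introduction, but the direct regularity computation via Lemma~\ref{lem:reg-2-gens} is self-contained (and in fact records the exact value of the regularity).
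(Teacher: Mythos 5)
Your argument is correct and follows the same route as the paper: identify $J_n(P_n+P_n)$ as a two-generated ideal on disjoint variable sets, apply Lemma~\ref{lem:reg-2-gens} to obtain regularity $2n-1$, and observe that this exceeds $n$. The only difference is that you spell out the path count and the $\lcm$ degree, which the paper leaves implicit.
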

	
	\begin{proof}
		We can assume that $J_n(G)= \left( \prod_{i=1}^n x_i, \prod_{j=1}^n y_j \right)$. Then by Lemma~\ref{lem:reg-2-gens}, we have $\reg J_n(G) = 2n -1 >n$, or in particular, $J_n(G)$ does not have linear resolution, as desired.
	\end{proof}
	
	For $n\geq 5$ and $k\in [3,n-2]$, define $L_{n,k}$ to be the graph with the vertex set
	\[
	V(L_{n,k}) = \{x_1,x_2,\dots, x_n\} \cup \{y_1,y_2,\dots , y_{k-1}\}
	\]
	and the edge set
	\[
	E(L_{n,k}) = \{x_ix_{i+1}\colon i\in [n-1]\} \cup\{y_iy_{i+1}\colon i\in [k-2]\} \cup \{y_{k-1}x_k\}.
	\]
	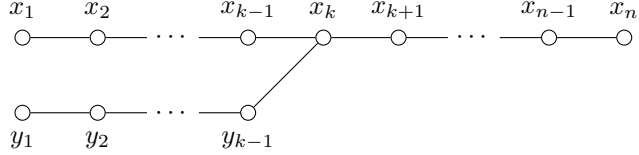
\begin{figure}[!hbt]
		
		\begin{center}
			\begin{tikzpicture}[every node/.style={circle,draw,inner sep=2pt}, every label/.append style={shape= rectangle, minimum height=5mm, align=center}]
				\foreach \i in {1,2} {
					\node[label={90:$x_{\i}$}] (x\i) at (\i, 0) {};
				}
				\node[draw=none] (dots1) at (3,0) {$\cdots$};
				\node[label={90:$x_{k-1}$}] (xk-1) at (4,0) {};
				\node[label={90:$x_k$}] (xk) at (5,0) {};
				\node[label={90:$x_{k+1}$}] (xk+1) at (6,0) {};
				\node[draw=none] (dots2) at (7,0) {$\cdots$};
				\node[label={90:$x_{n-1}$}] (xn-1) at (8,0) {};
				\node[label={90:$x_n$}] (xn) at (9,0) {};
				\foreach \i in {1,2} {
					\node[label={270:$y_{\i}$}] (y\i) at (\i,-1) {};
				}
				\node[draw=none] (dots3) at (3,-1) {$\cdots$};
				\node[label={270:$y_{k-1}$}] (yk-1) at (4,-1) {};
				\draw (x1) -- (x2) -- (dots1) -- (xk-1) -- (xk) -- (xk+1) -- (dots2) -- (xn-1) -- (xn);
				\draw (y1) -- (y2) -- (dots3) -- (yk-1) -- (xk);
			\end{tikzpicture}
		\end{center}
		\label{fig:Diagram of L_{n,k}}
		\caption{The graph $L_{n,k}$}
	\end{figure}
	
	 The graph $L_{n,k}$ is a tree. If $k\in[ (n+1)/2, n-2]$, it is straightforward that the induced subgraph of $L_{n,k}$ on the vertex set $x_1,\dots , x_n, y_{2k-n}, y_{n-k+1},\dots , y_{k-1}$ is isomorphic to $L_{n,n-k+1}$ where $n-k+1\in [3,(n+1)/2]$. This implies the following result.
    \begin{lemma}\label{lem:minimal-Lnk}
        A graph $G$ does not contain $L_{n,k}$, where $k\in [3,(n+1)/2]$, as an induced subgraph if and only if it does not contain $L_{n,k}$, where $k\in [3,n-2]$, as an induced subgraph.
    \end{lemma}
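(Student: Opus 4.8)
The plan is to establish the two directions of the equivalence separately, the forward direction being essentially trivial and the reverse direction following from the observation already flagged just before the statement. For the ``only if'' direction: since $[3,(n+1)/2] \subseteq [3,n-2]$ for $n \geq 5$ (one checks $(n+1)/2 \leq n-2 \iff n \geq 5$), if $G$ contains no $L_{n,k}$ for any $k$ in the larger range, then in particular it contains none for $k$ in the smaller range, which is the contrapositive of what we want. So this direction requires only the numerical remark that the smaller interval sits inside the larger.

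For the ``if'' direction, I would argue the contrapositive: suppose $G$ contains $L_{n,k}$ as an induced subgraph for some $k \in [3,n-2]$; we must produce some $k' \in [3,(n+1)/2]$ with $L_{n,k'}$ an induced subgraph of $G$. If $k \leq (n+1)/2$ already, take $k' = k$ and we are done. Otherwise $k \in [(n+1)/2, n-2]$, and here I invoke the construction described immediately before the lemma: inside $L_{n,k}$ itself, the induced subgraph on the vertex set $\{x_1,\dots,x_n\} \cup \{y_{2k-n}\} \cup \{y_{n-k+1},\dots,y_{k-1}\}$ is isomorphic to $L_{n,n-k+1}$, and $n-k+1$ lies in $[3,(n+1)/2]$. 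Since an induced subgraph of an induced subgraph of $G$ is an induced subgraph of $G$, we obtain $L_{n,n-k+1}$ as an induced subgraph of $G$, so $k' = n-k+1$ works.

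The one step that deserves care — and that I expect to be the only real content — is verifying the claimed isomorphism $L_{n,k}|_{\{x_1,\dots,x_n,\,y_{2k-n},\,y_{n-k+1},\dots,y_{k-1}\}} \cong L_{n,n-k+1}$. Here the key point is that when $k > (n+1)/2$, the ``hanging path'' $y_1,\dots,y_{k-1}$ attached at $x_k$ is long enough that only its tail $y_{n-k+1},\dots,y_{k-1}$ (of length $n-k$, giving $n-k+1$ vertices counting down to the attachment point, hence a copy of the $y$-branch of $L_{n,n-k+1}$, which has $k'-1 = n-k$ such vertices) together with one isolated extra vertex $y_{2k-n}$ survives; crucially $y_{2k-n}$ is \emph{not} adjacent to any $x_i$ and not adjacent to $y_{n-k+1}$ (since the omitted vertices $y_{2k-n+1},\dots,y_{n-k}$ separate them in the induced subgraph), so it contributes exactly the isolated ``extra leaf'' needed to match the vertex count of $L_{n,n-k+1}$, which has $n + (n-k+1) - 1 = 2n-k$ vertices — matching $n + 1 + (k-1) - (n-k) = 2n-k$. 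I would spell out the edge set of the induced subgraph explicitly and match it against $E(L_{n,n-k+1})$ under the relabelling $x_i \mapsto x_i$, $y_j \mapsto y_{j-(n-k)}$ for $j \geq n-k+1$, and $y_{2k-n} \mapsto$ (the isolated vertex), noting that in $L_{n,n-k+1}$ the index $k' = n-k+1$ places the attachment at $x_{n-k+1}$ — so one must double-check that the $x$-path attachment point is consistent, i.e. that $L_{n,n-k+1}$ attaches its $y$-branch at $x_{n-k+1}$ while the relevant subgraph of $L_{n,k}$ attaches at $x_k$; these differ, so the isomorphism must also reverse the $x$-path, sending $x_i \mapsto x_{n+1-i}$. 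Tracking this reversal correctly is the main bookkeeping obstacle, but it is routine once set up, and Lemma~\ref{lem:forbidden-Pn+Pn}-style degree/adjacency checks confirm it.
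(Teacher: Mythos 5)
You have the right macro-structure, which is also the paper's: one direction is pure interval containment $[3,(n+1)/2]\subseteq[3,n-2]$ (for $n\geq 5$), the other reduces to exhibiting an induced copy of $L_{n,n-k+1}$ inside $L_{n,k}$ when $k\in[(n+1)/2,n-2]$, with the $x$-path reversed so the attachment lands at $x_{n-k+1}$. (Your ``if'' and ``only if'' labels are swapped relative to the statement, but both directions appear, so that is cosmetic.) The trouble is in the step you yourself flag as ``the only real content'' — the isomorphism check — where your reasoning, taken at face value, would \emph{refute} the claim rather than verify it.

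You assert that $y_{2k-n}$ is not adjacent to any vertex of the chosen set, i.e.\ that it sits as an isolated vertex of the induced subgraph. But $L_{n,n-k+1}$ is a tree, hence connected, so it has no isolated vertex; a graph with one cannot be isomorphic to it. Your vertex-count cross-check also fails: $n+1+\bigl((k-1)-(n-k)\bigr)=2k$, not $2n-k$, and $2k=2n-k$ only when $k=2n/3$. The source of the problem is that you read the paper's displayed vertex set literally; the index ``$n-k+1$'' there is a misprint for ``$2k-n+1$''. The intended set is the contiguous tail $\{x_1,\dots,x_n,\,y_{2k-n},y_{2k-n+1},\dots,y_{k-1}\}$. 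This has exactly $(k-1)-(2k-n)+1=n-k$ many $y$'s, matching the branch length $k'-1$ of $L_{n,n-k+1}$; it is a genuine path of consecutive $y$-indices ending in $y_{k-1}$, which is adjacent to $x_k$; and the starting index $2k-n$ is at least $1$ precisely because $k\geq(n+1)/2$. Relabelling $x_i\mapsto x_{n+1-i}$ and $y_j\mapsto y_{j-2k+n+1}$ then carries edges to edges and gives the isomorphism cleanly. You should carry out that check rather than try to rationalize a set whose cardinality does not even match the target.
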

    This (trivial) result means that the only (minimal) forbidden structures as an induced subgraph among $L_{n,k}$, where $k\in [3,n-2]$, are $L_{n,k}$, where $k\in [3,(n+1)/2]$. However, the former is easier to use (as shall be seen in the sequel) due to its symmetry.

    Now assume that $k\in [3,(n+1)/2]$. There are three paths in $L_{n,k}$ that are candidates of being the longest paths, namely those that connect two among the vertices $x_1, y_1,$ and $x_n$. The two paths that connect $x_n$ to $x_1$ and $y_1$ are both of length $n-1$. On the other hand, the path connecting $x_1$ to $y_1$ is of length $2(k-1)\leq n-1$. Therefore, the ideal $J_n(L_{n,k})$ has at least two minimal generators
    \[
    \prod_{i=1}^n x_n\quad \text{ and }\quad \left(\prod_{i=1}^{k-1} y_i\right)\left( \prod_{j=k}^n x_j\right)
    \]
    and potentially one more: $\left(\prod_{i=1}^{k-1} y_i\right)\left( \prod_{j=1}^k x_j\right)$ exactly when $k=(n+1)/2$ (in this case, $n$ must be odd). Now, we show that the class of graphs $L_{n,k}$ where $ k\in [3,(n+1)/2]$ are forbidden induced subgraphs of $G$ when $J_n(G)$ have linear resolution.

	\begin{lemma}\label{lem:forbidden-structures}
		Let $G=L_{n,k}$ where $n\geq 5$  and $k\in [3,(n+1)/2]$. Then $J_n(G)$ does not have linear resolution.
	\end{lemma}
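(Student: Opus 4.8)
The plan is to use the small-generator regularity computations from Lemmas~\ref{lem:reg-2-gens} and~\ref{lem:reg-3-gens}, applied to a carefully chosen restriction of $J_n(G)$. The discussion immediately preceding the statement already identifies the relevant minimal generators of $J_n(L_{n,k})$. First I would separate the two cases singled out there. When $3\leq k < (n+1)/2$, the ideal $J_n(G)$ has exactly the two minimal generators $m_1 = \prod_{i=1}^n x_i$ and $m_2 = \left(\prod_{i=1}^{k-1} y_i\right)\left(\prod_{j=k}^n x_j\right)$, since the third candidate path $x_1,\dots,x_k,y_{k-1},\dots,y_1$ has length $k+(k-1)-1 = 2k-2 < n-1$ and so carries no generator. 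These two monomials do not divide each other, and their lcm is $\left(\prod_{i=1}^{k-1} y_i\right)\left(\prod_{j=1}^n x_j\right)$, a monomial of degree $(k-1)+n = n+k-1$. By Lemma~\ref{lem:reg-2-gens} we get $\reg J_n(G) = n+k-2 \geq n+1 > n$, so $J_n(G)$ has no linear resolution.

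For the remaining case $k = (n+1)/2$ (so $n$ is odd), there is a third minimal generator $m_3 = \left(\prod_{i=1}^{k-1} y_i\right)\left(\prod_{j=1}^k x_j\right)$. Here I would check that $m_3 \mid \lcm(m_1,m_2) = \left(\prod_{i=1}^{k-1} y_i\right)\left(\prod_{j=1}^n x_j\right)$, which is immediate since $m_3$ involves only $y_1,\dots,y_{k-1}$ and $x_1,\dots,x_k$, all of which appear in that lcm. So Lemma~\ref{lem:reg-3-gens} applies and gives
\[
\reg J_n(G) = \max\{\deg\lcm(m_1,m_3),\ \deg\lcm(m_2,m_3)\} - 1.
\]
Now $\lcm(m_1,m_3) = \left(\prod_{i=1}^{k-1} y_i\right)\left(\prod_{j=1}^n x_j\right)$ has degree $n+k-1$, so $\reg J_n(G) \geq n+k-2 = n + (n-3)/2 \geq n+1 > n$, again ruling out a linear resolution. (The $\lcm(m_2,m_3)$ term is not needed for the lower bound, but one can note it equals the same monomial.)

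Honestly, there is no serious obstacle here: the statement is essentially a bookkeeping exercise once one has the right list of minimal generators and the regularity lemmas. The one point that genuinely requires care is confirming that $L_{n,k}$ has \emph{no other} $n$-paths than the ones listed — i.e., that the only paths of length $\geq n-1$ in this tree are the two (or three) described — since an overlooked generator could a priori lower the regularity or invalidate the hypothesis $m_3 \mid \lcm(m_1,m_2)$ of Lemma~\ref{lem:reg-3-gens}. This follows from the tree structure: every path in $L_{n,k}$ is determined by its two endpoints, and the only pairs of vertices at distance $\geq n-1$ are among $\{x_1, x_n, y_1\}$, with distances $n-1$, $n-1$, and $2k-2$ respectively; so for $k<(n+1)/2$ only the $x_n$-endpoints give length-$n$ paths, and for $k=(n+1)/2$ the $x_1$-$y_1$ path also attains length $n-1$. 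Assembling these two cases and invoking Lemma~\ref{lem:LQ-implies-LR} (contrapositive) to also conclude the failure of linear quotients completes the argument.
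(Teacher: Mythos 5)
Your proof is correct and takes essentially the same route as the paper: case split on whether $k < (n+1)/2$ or $k = (n+1)/2$, apply Lemma~\ref{lem:reg-2-gens} (resp.\ Lemma~\ref{lem:reg-3-gens}) to the explicitly listed generators, and conclude $\reg J_n(G) = n+k-2 \geq n+1 > n$. The extra care you take in verifying the generator list and the divisibility hypothesis of Lemma~\ref{lem:reg-3-gens} matches the discussion the paper places immediately before the lemma.
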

	\begin{proof}
		If $k<(n+1)/2$, then 
        \[
        J_n(G)=\left(\prod_{i=1}^n x_n,\  \left(\prod_{i=1}^{k-1} y_i\right)\left( \prod_{j=k}^n x_j\right)\right),
        \]
        and thus $\reg J_n(G) = n+k-2 \geq n+1$ by Lemma~\ref{lem:reg-2-gens}. On the other hand, if $k=\frac{n+1}{2}$, then
        \[
        J_n(G)=\left(\prod_{i=1}^n x_n,\  \left(\prod_{i=1}^{k-1} y_i\right)\left( \prod_{j=k}^n x_j\right), \ \left(\prod_{i=1}^{k-1} y_i\right)\left( \prod_{j=1}^k x_j\right)\right),
        \]
        and thus $\reg J_n(G) = \max\{ n+(k-1)-1, n+(k-1)-1 \} = n+k-2 \geq n+1$ by Lemma~\ref{lem:reg-3-gens}. In both cases, it follows that $J_n(G)$ does not have linear resolution.
	\end{proof} 
    Interestingly, while $J_4(G)$ having linear resolution has a forbidden structure other than $P_4+P_4$, it does not follow the same rule as $J_n(G)$ for $n\geq 5$. In fact, $L_{5,3}$ serves as a (minimal) forbidden structure for both $J_4(G)$ and $J_5(G)$ having linear resolution.

	\begin{lemma}\label{lem:forbidden-for-J4}
		The ideal $J_4(L_{5,3})$ does not have linear resolution. 
	\end{lemma}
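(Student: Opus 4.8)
The plan is to compute $J_4(L_{5,3})$ explicitly and then apply the regularity formulas for ideals with two or three generators. First I would identify the $4$-paths in $L_{5,3}$: by the definition of $L_{n,k}$, $L_{5,3}$ has vertices $x_1,\dots,x_5$ forming a path, together with $y_1,y_2$ where $y_2$ is attached to $x_3$ (since $k-1 = 2$ and the extra edge is $y_{k-1}x_k = y_2x_3$). The $4$-paths (paths on $4$ vertices) are: $x_1x_2x_3x_4$, $x_2x_3x_4x_5$, $y_1y_2x_3x_2$, and $y_1y_2x_3x_4$. So $J_4(L_{5,3}) = (x_1x_2x_3x_4,\ x_2x_3x_4x_5,\ y_1y_2x_2x_3,\ y_1y_2x_3x_4)$, which has four minimal generators, not three, so Lemma~\ref{lem:reg-3-gens} does not apply directly.

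To handle four generators I would look for an Eliahou-Kervaire splitting, or better, use Lemma~\ref{lem:linear-resolution-induced} with a suitable restriction $I^{\leq m}$. Taking $m = x_1x_2x_3x_4x_5y_1y_2$ (the whole variable set) does nothing, so instead I would restrict to a monomial that kills one generator but keeps a ``bad pair''. A cleaner route: observe that the induced subideal on the generators $\{x_1x_2x_3x_4,\ y_1y_2x_3x_4\}$ is not enough since $\lcm$ has degree $6 = n+1$, giving $\reg \geq 5 > 4$ by Lemma~\ref{lem:reg-2-gens} — wait, but that is a subideal, not a restriction $I^{\leq m}$. The correct device is: set $m = x_1x_3x_4y_1y_2$ (degree $5$); then among the four generators, only $x_1x_2x_3x_4$ fails to divide nothing — actually $x_1x_2x_3x_4 \nmid m$ because of $x_2$. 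Let me instead take $m = \lcm(x_1x_2x_3x_4,\ y_1y_2x_3x_4) = x_1x_2x_3x_4y_1y_2$. Then $J_4(L_{5,3})^{\leq m}$ consists of those generators dividing $x_1x_2x_3x_4y_1y_2$: this includes $x_1x_2x_3x_4$, $y_1y_2x_2x_3$, $y_1y_2x_3x_4$, but excludes $x_2x_3x_4x_5$ (because of $x_5$). So $J_4(L_{5,3})^{\leq m} = (x_1x_2x_3x_4,\ y_1y_2x_2x_3,\ y_1y_2x_3x_4)$, a three-generator ideal. Here $y_1y_2x_2x_3 \mid \lcm(x_1x_2x_3x_4,\ y_1y_2x_3x_4) = x_1x_2x_3x_4y_1y_2$, so Lemma~\ref{lem:reg-3-gens} applies with $m_1 = x_1x_2x_3x_4$, $m_2 = y_1y_2x_3x_4$, $m_3 = y_1y_2x_2x_3$, giving $\reg = \max\{\deg\lcm(m_1,m_3),\ \deg\lcm(m_2,m_3)\} - 1 = \max\{\deg(x_1x_2x_3x_4y_1y_2),\ \deg(y_1y_2x_2x_3x_4)\} - 1 = \max\{6,5\} - 1 = 5 > 4$.

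Then I would conclude: by Lemma~\ref{lem:linear-resolution-induced}, since $J_4(L_{5,3})^{\leq m}$ does not have linear resolution (its regularity $5$ exceeds its generating degree $4$), the ideal $J_4(L_{5,3})$ does not have linear resolution either. The main obstacle I anticipate is purely bookkeeping: correctly enumerating all $4$-paths in $L_{5,3}$ (it is easy to miss one or to double-count $x_1x_2x_3x_4$ versus $x_4x_3x_2x_1$, which are the same path) and then choosing the restriction monomial $m$ so that exactly one generator drops out, leaving a three-generator ideal to which Lemma~\ref{lem:reg-3-gens} applies with the divisibility hypothesis $m_3 \mid \lcm(m_1,m_2)$ genuinely satisfied. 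Once the right $m$ is in hand, the rest is a one-line degree count.
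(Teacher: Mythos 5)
There is a genuine gap: you miscounted the $4$-paths of $L_{5,3}$. Since $y_2$ is attached to $x_3$ and has degree $2$, there are also paths running through $y_2$ into only one of the two arms: $y_2,x_3,x_2,x_1$ and $y_2,x_3,x_4,x_5$. The correct generating set is
\[
J_4(L_{5,3})=x_3\bigl(x_1x_2x_4,\ x_1x_2y_2,\ x_2x_4x_5,\ x_4x_5y_2,\ x_2y_1y_2,\ x_4y_1y_2\bigr),
\]
i.e.\ six minimal generators, not four. Once the two missing generators are included, your restriction to $m=x_1x_2x_3x_4y_1y_2$ contains $x_1x_2x_3y_2$ as well, so $J_4(L_{5,3})^{\leq m}$ has four generators and Lemma~\ref{lem:reg-3-gens} does not apply.

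Worse, the whole restriction strategy cannot be repaired here. For a squarefree monomial $m$ one has $J_4(L_{5,3})^{\leq m}=J_4(H)$ where $H$ is the induced subgraph on $\operatorname{supp}(m)$, and $L_{5,3}$ is a \emph{minimal} forbidden configuration: every proper induced subgraph $H$ of $L_{5,3}$ is a tree on at most $6$ vertices of diameter at most $5=2\cdot 4-3$, so $J_4(H)$ has linear quotients by Proposition~\ref{prop:LQ-2n-3} and hence linear resolution. (For instance, $J_4(L_{5,3})^{\leq x_1x_2x_3x_4y_1y_2}=J_4(L_{5,3}\setminus x_5)$ has linear resolution even though it has four generators.) So no restriction exhibits nonlinearity. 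This is exactly why the paper instead decomposes $J_4(L_{5,3})=J+K$ by the presence/absence of $x_1$, takes $J=x_1x_2x_3(x_4,y_2)$ (which has linear resolution), computes $J\cap K=x_1x_2x_3(x_4x_5,y_1y_2)$, and invokes the general lower bound $\reg J_4(L_{5,3})\geq\reg(J\cap K)-1=5$ from \cite[Corollary~2.7]{FHV09} together with Lemma~\ref{lem:reg-3-gens}; this splitting-based lower bound, rather than the Restriction Lemma, is the tool that detects the nonlinearity.
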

	
	\begin{proof}
		It is straightforward that 
		\[
		J_4(L_{5,3})= x_3(x_1x_2x_4,x_1x_2y_2,x_2x_4x_5,x_4x_5y_2,x_2y_1y_2,x_4y_1y_2).\]
		Set
		\begin{align*}
			J\coloneqq x_1x_2x_3(x_4,y_2) \quad \text{and} \quad 
			K\coloneqq x_3(x_2x_4x_5,x_4x_5y_2,x_2y_1y_2,x_4y_1y_2).
		\end{align*}
		It is clear that $J$ is generated by all minimal generators of $J_4(L_{5,3})$ that are divisible by $x_1$, and $K$ is generated by the remaining generators of $J_4(L_{5,3})$. Since $J$ has linear resolution, by \cite[Corollary 2.7]{FHV09} and Lemma~\ref{lem:reg-3-gens}, we have 
		\[
		\reg J_4(L_{5,3}) \geq \reg(J\cap K) - 1 = \reg \left( x_1x_2x_3(x_4x_5, y_1y_2) \right) -1 = 5.
		\]
		In particular, $J_4(L_{5,3})$ does not have linear resolution, as desired.
	\end{proof}
	
    For $n=2,3$, the only forbidden structure the lemmas in this section provide is $P_n+P_n$ . As it turns out, this is the only forbidden structure in these cases, given that the graph is a tree, as discussed in the introduction (see also, \cite[Proposition 3.9]{Zheng2004}, \cite[Theorem 3.2]{HHZ2004}, and  \cite[Theorem 5.1]{AJM2024}). Therefore, for the rest of article, we assume that $n\geq 4$. We will often consider the case when $G$ does not contain the forbidden structures we have found. Moreover, we make some observations.
    \begin{observation}\label{obser}
     Let $G$ be a graph and $n\geq 2$ an integer. Then
        \begin{itemize}
            \item $J_n(G)\neq (0)$ if and only if $\diam(G)\geq n-1$;
            \item for a tree $G$ which does not contain $P_n+P_n$ as an induced subgraph, one has $\diam(G)\leq 2n-1$.
        \end{itemize}
    \end{observation}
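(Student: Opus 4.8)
The two bullet points are logically independent, so I would prove them one at a time, and both are short.

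For the first, I would simply unwind the definition of $J_n(G)$: its minimal generators are the monomials $\prod_{j=1}^{n} x_{i_j}$ attached to $n$-paths of $G$, i.e.\ to paths on $n$ vertices (equivalently, of length $n-1$). Thus $J_n(G)\neq (0)$ exactly when $G$ has a path on $n$ vertices. Since any sub-path of a path is again a path, $G$ has a path on $n$ vertices if and only if its longest path has at least $n$ vertices, i.e.\ $\diam(G)\geq n-1$. No computation is needed beyond this observation.

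For the second, I would argue by contradiction: assume $G$ is a tree with $\diam(G)\geq 2n$, and fix a path $v_0,v_1,\dots,v_{2n}$ obtained as a sub-path of a longest path of $G$. Set $A=\{v_0,\dots,v_{n-1}\}$ and $B=\{v_{n+1},\dots,v_{2n}\}$; these are disjoint (the vertex $v_n$ sits strictly between them), and each has $n$ elements. I claim the induced subgraph $H$ of $G$ on $A\cup B$ is $P_n+P_n$. The segments $v_0\cdots v_{n-1}$ and $v_{n+1}\cdots v_{2n}$ are already paths in $G$, so it suffices to rule out any further edge among these $2n$ vertices. But any edge $v_iv_j$ with $j\geq i+2$ (whether $i,j$ both lie in $A$, both in $B$, or are split between them) closes the path segment $v_i,v_{i+1},\dots,v_j$ into a cycle of length $j-i+1\geq 3$, which is impossible in a tree. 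Hence $H\cong P_n+P_n$, contradicting the hypothesis; therefore $\diam(G)\leq 2n-1$.

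I do not anticipate a genuine obstacle here; the only points that require a little care are (a) the convention that an $n$-path has $n$ vertices, so that ``an $n$-path exists'' translates to $\diam(G)\geq n-1$ rather than $\diam(G)\geq n$, and (b) choosing the splitting of the longest path with a spare ``buffer'' vertex $v_n$ in the middle --- this is exactly what keeps the two copies of $P_n$ non-adjacent, and is the reason the bound is $2n-1$ and not smaller. Both are routine once made explicit.
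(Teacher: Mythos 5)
Your proof is correct and follows the same route as the paper: the first bullet is definitional, and the second splits a path on $2n+1$ vertices into two blocks of $n$ vertices separated by a middle buffer vertex, using the tree hypothesis to rule out extra edges in the induced subgraph. The only difference is that you make the no-extra-edges/no-cycle argument explicit, whereas the paper leaves it implicit.
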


    The first observation follows from definition. The second observation can be shown using contraposition: let $G$ be a tree with $\diam(G)\geq  2n$; then $G$ contains a path $x_1,\dots, x_{2n+1}$, and the induced subgraph of $G$ with the vertex set $\{x_1,\dots, x_{n},x_{n+2},\dots, x_{2n+1}\}$ is $P_n+P_n$, as~desired.
    
    In light of our observations, we set up the following notation. Here we use the letter F for these conditions, where F is short for ``forbidden".

    \begin{notation}\normalfont\label{F(n) condition} 
	    We assume that $G$ is a tree. Set
	\begin{align*}
		\diam(G)\in [3,7] \text{ and } G \text{ does not contain $P_4+P_4$ or $L_{5,3}$ as an induced subgraph} \tag{$F_4$}
	\end{align*}
	and for any $n\geq 5$,
	\begin{align*}
		&\begin{multlined}[t]
			\diam(G)\in [n-1,2n-1] \text{ and } G \text{ does not contain $P_n+P_n$ or $L_{n,k}$}\\
			\text{ as an induced subgraph for any $k\in [3,\ n-2].$}
		\end{multlined} \tag{$F_n$}
	\end{align*}
	\end{notation}

	\section{Trimmed trees}\label{sec:trimmed trees}
	

	The main objective of this section is to understand the structure of a tree if it does not contain the forbidden structures described in the previous section. Given an integer $n\geq 4$. Assume that $G$ either is a caterpillar tree or satisfies the $(F_n)$ condition. We define the following operation on $G$, which we call \emph{trimming}. For a longest path $v_1, v_2,\dots, v_{\diam(G)+1}$ in $G$, we define the \emph{trimmed graph} of $G$ (w.r.t the path $v_1, v_2,\dots, v_{\diam(G)+1}$), denoted by $\trim(G,v_1,v_2,\dots, v_{\diam(G)+1})$, to be the induced subgraph of $G$ with the vertex set $\bigcup_{i=1}^{\diam(G)+1} N_G[v_i]$. It is important to note that this operation depends on a graph $G$ and a longest path of $G$. Therefore,  it is somewhat surprising that this operation only depends on $G$ when $G$ has the $(F_n)$ condition for some $n\geq 4$, as shall be shown in Corollary~\ref{cor:trim-well-defined}. It is worth noting that $\trim(G,v_1,v_2,\dots, v_{\diam(G)+1})$ is a caterpillar graph by construction. Before continuing, we fix some notations.
	\begin{notation}\label{nota:caterpillar}\normalfont
		Let $G$ be a caterpillar tree of diameter $d$, and $x_1,\dots, x_{d-1}$ the induced path formed by all vertices of degree $2$ or more of $G$. By definition, all other vertices of $G$ are of degree $1$, and since $G$ is connected, any vertex in $V(G)\setminus \{x_1,\dots, x_{d-1}\}$ must be incident to exactly one vertex in $\{x_1,\dots, x_{d-1}\}$. In other words, we have $V(G)=\{x_1,\dots, x_{d-1}\}\cup \left(\bigcup_{i=1}^{d-1} N_G(x_i) \right)$. It would be easier to work with a disjoint union. Therefore, for any $i\in[1,d-1]$, we set $LN_G(x_i)\coloneqq N_G(x_i)\setminus \{x_{i-1},x_{i+1}\}$, called the set of \emph{leaf neighbors} of $x_i$. Then
		\begin{align*}
		    V(G)&= \{x_1,\dots, x_{d-1}\}\sqcup \left(\bigsqcup_{i=1}^{d-1} LN_G(x_i) \right),\\
		E(G)&= \{ x_ix_{i+1}\colon i\in [1,d-2] \}\sqcup \{x_iy\colon i\in [1,d-1], y\in  LN_G(x_i)\}.
		\end{align*}

		For each $i\in [1,d-1]$, we set $LN_G(x_i)\eqqcolon\{x_{i1},\dots, x_{il_i}\}$ where $l_1\geq 1, l_{d-1}\geq 1$, and $l_i\geq 0$ if $i\in[2,d-2]$.
	\end{notation}
	
	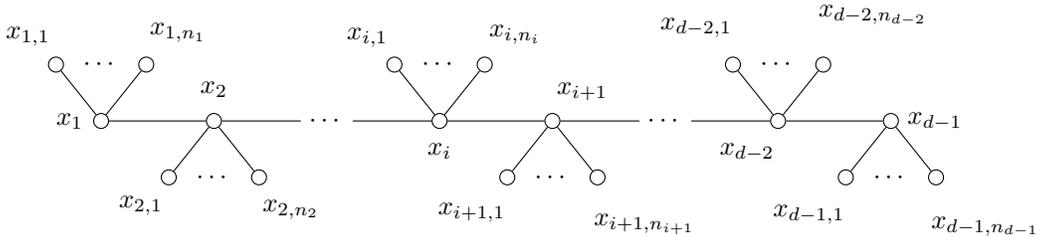
\begin{figure}[H]
		\begin{center}
			\begin{tikzpicture}[every node/.style={circle,draw,inner sep=2pt}, scale=1.5]
				
				
				\node[label={180: $x_1$}] (x1) at (0,0) {} ;
				\node[label={90: $x_2$}] (x2) at (1,0) {};
				\node[label={270:$x_{i}$}] (x3) at (3,0) {};
				\node[label={45: $x_{i+1}$}] (x4) at (4,0) {};
				\node[label={225: $x_{d-2}$}] (x5) at (6,0) {};
				\node[label={0: $x_{d-1}$}] (x6) at (7,0) {};
				\node[draw=none] (dot1) at (2,0) {$\cdots$};
				\node[draw=none] (dot2) at (5,0) {$\cdots$};
				\node[draw=none] (dot3) at (0,0.5) {$\cdots$};
				\node[draw=none] (dot4) at (3,0.5) {$\cdots$};
				\node[draw=none] (dot5) at (6,0.5) {$\cdots$};
				\node[draw=none] (dot6) at (1,-0.5) {$\cdots$};
				\node[draw=none] (dot7) at (4,-0.5) {$\cdots$};
				\node[draw=none] (dot8) at (7,-0.5) {$\cdots$};
				
				\node[label={135: $x_{1,1}$}] (u1) at (-0.4,0.5) {} ;
				\node[label={45: $x_{1,n_1}$}] (u2) at (0.4,0.5) {} ;
				\node[label={135: $x_{i,1}$}] (u3) at (2.6,0.5) {} ;
				\node[label={45: $x_{i,n_i}$}] (u4) at (3.4,0.5) {} ;
				\node[label={135: $x_{d-2, 1}$}] (u5) at (5.6,0.5) {} ;
				\node[label={45: $x_{d-2, n_{d-2}}$}] (u6) at (6.4,0.5) {} ;
				
				\node[label={225: $x_{2,1}$}] (l1) at (0.6,-0.5) {} ;
				\node[label={315: $x_{2,n_2}$}] (l2) at (1.4,-0.5) {} ;
				\node[label={225: $x_{i+1,1}$}] (l3) at (3.6,-0.5) {} ;
				\node[label={315: $x_{i+1,n_{i+1}}$}] (l4) at (4.4,-0.5) {} ;
				\node[label={225: $x_{d-1, 1}$}] (l5) at (6.6,-0.5) {} ;
				\node[label={315: $x_{d-1, n_{d-1}}$}] (l6) at (7.4,-0.5) {} ;
				
				\draw (x1) -- (x2);
				\draw (x3) -- (x4);
				\draw (x5) -- (x6);
				\draw (x2) -- (dot1);
				\draw (dot1) -- (x3);
				\draw (x4) -- (dot2);
				\draw (dot2) -- (x5);
				\draw (u1) -- (x1);
				\draw (u2) -- (x1);
				\draw (u3) -- (x3);
				\draw (u4) -- (x3);
				\draw (u5) -- (x5);
				\draw (u6) -- (x5);
				\draw (l1) -- (x2);
				\draw (l2) -- (x2);
				\draw (l3) -- (x4);
				\draw (l4) -- (x4);
				\draw (l5) -- (x6);
				\draw (l6) -- (x6);
			\end{tikzpicture}\\
			
		\end{center}
		\label{fig:caterpillar_G}
		\caption{A caterpillar graph $G$}
	\end{figure}

	\begin{lemma}\label{lem:2-longest-paths}
		Let $G$ be either a caterpillar tree or a tree that satisfies the $(F_n)$ condition for some integer $n\geq 4$. Set $\diam(G)=d$. Then for any two longest paths $z_1,z_2,\dots, z_{d+1}$ and  $w_1,w_2,\dots, w_{d+1}$ of $G$, either of the following holds:
		\begin{enumerate}
			\item $z_2=w_2$ and $z_{d}=w_{d}$.
			\item $z_2=w_{d}$ and $z_{d}=w_2$.
		\end{enumerate}
	\end{lemma}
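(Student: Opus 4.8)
The plan is to exploit the rigidity of longest paths in a tree. In a tree, any path is uniquely determined by its endpoints, so the content of the statement is that the two \emph{second vertices} (equivalently, the two endpoints' neighbors) $\{z_2, z_d\}$ of one longest path must coincide, as an unordered pair, with $\{w_2, w_d\}$ of the other. I would first dispose of the degenerate/easy structural facts: the endpoints $z_1, z_{d+1}, w_1, w_{d+1}$ are all leaves (otherwise the path extends, contradicting maximality), and each of $z_2, z_d, w_2, w_d$ has degree at least $2$. The key first reduction is to show that two longest paths in a tree must share at least one vertex; indeed, if $\mathcal{P}_z$ and $\mathcal{P}_w$ were vertex-disjoint, then since $G$ is connected there is a path in $G$ joining a vertex of $\mathcal{P}_z$ to a vertex of $\mathcal{P}_w$, and splicing the longer ``half'' of each path onto the ends of this connecting path yields a path of length $> d$, a contradiction. (Here I use that a vertex on a path of length $d$ has a sub-path of length $\geq d/2$ on one side.)

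\textbf{Main argument.} Having established that $\mathcal{P}_z$ and $\mathcal{P}_w$ meet, let $Q = \mathcal{P}_z \cap \mathcal{P}_w$; since we are in a tree, $Q$ is itself a (connected) sub-path, say with endpoints $a$ and $b$ (possibly $a = b$). Walking along $\mathcal{P}_z$ from $a$ away from $b$ and along $\mathcal{P}_w$ from $a$ away from $b$ gives two paths that leave $a$ in different directions (else they would continue to overlap past $a$, contradicting that $a$ is an endpoint of $Q$); call their lengths $\alpha_z, \alpha_w$ and symmetrically $\beta_z, \beta_w$ for the parts beyond $b$. Then $\alpha_z + \length(Q) + \beta_z = d = \alpha_w + \length(Q) + \beta_w$. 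The crucial inequality is that $\alpha_z \geq \alpha_w$: otherwise I could replace the $a$-side arm of $\mathcal{P}_w$ by the $a$-side arm of $\mathcal{P}_z$ — these two arms meet only at $a$ because they immediately diverge and the graph is a tree — obtaining a walk, hence a path, of length $\alpha_z + \length(Q) + \beta_w > d$, a contradiction. By symmetry $\alpha_w \geq \alpha_z$, so $\alpha_z = \alpha_w$, and likewise $\beta_z = \beta_w$. This forces both longest paths to have the \emph{same} total arm lengths on each side of $Q$, and in fact a parity/length bookkeeping argument shows each arm has length exactly $(d - \length(Q))/2$ (if, say, $\alpha_z > \beta_z$, then swapping the $b$-side arm of $\mathcal{P}_z$ for the $a$-side arm of $\mathcal{P}_z$ reattached at $b$ would create a longer path — again using the tree structure to see these arms are internally disjoint — unless $\length(Q) = 0$, which is handled separately).

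\textbf{Conclusion and the point where the hypotheses enter.} Once the arm lengths are pinned down, I would analyze the two possible local configurations at the shared sub-path $Q$. If $Q$ has length $\geq 1$ with endpoints $a \neq b$, then the $a$-arms of $\mathcal{P}_z$ and $\mathcal{P}_w$ both emanate from $a$; if these arms were genuinely different (diverging at $a$ or at some later vertex), one shows — this is where $L_{n,k}$-freeness or the caterpillar hypothesis is needed — that $G$ contains a forbidden induced subgraph (an $L_{d+1, k}$ configuration arising from the two diverging arms together with the rest of the path), contradicting $(F_n)$; hence the $a$-arms coincide, giving $z_2 = w_2$ (or $z_2 = w_d$ after orienting $\mathcal{P}_w$ appropriately), and symmetrically at $b$. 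If instead $Q$ is a single vertex $c = a = b$, then all four arms pass through $c$; since $\alpha_z = \alpha_w = \beta_z = \beta_w = d/2$ and a tree vertex cannot have three arms of this length all pairwise diverging without producing a forbidden structure (or violating maximality), two of the four arms must coincide as sets, and matching them up forces precisely case (1) or case (2). I expect \textbf{the main obstacle} to be exactly this last case analysis: cleanly showing that whenever an $a$-arm of $\mathcal{P}_z$ and an $a$-arm of $\mathcal{P}_w$ are distinct, the resulting branched configuration is (induced-)isomorphic to some $L_{n,k}$ with $k \in [3, n-2]$ — one has to check the branch point is far enough from both ends of the longest path that $k$ genuinely lands in the forbidden range, and separately verify that the caterpillar hypothesis rules out the same configuration by a degree argument along the central path.
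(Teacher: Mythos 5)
Your high-level plan matches the paper's in its two essential ingredients: first show two longest paths in $G$ must intersect (your ``splicing halves'' argument is exactly the paper's Claim~4.4), and then extract a forbidden $L_{n,k}$ from the branch point. However, there is a concrete wrong step and a consequential vagueness.

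The wrong step is the ``parity/length bookkeeping'' assertion that each arm has length exactly $(d-\length(Q))/2$, i.e.\ that $\alpha_z=\beta_z$. This is false in general: take a caterpillar with central path $x_1,\dots,x_5$, two leaves at $x_1$ and a single leaf at $x_5$; the two longest paths share the subpath $Q=x_1,\dots,x_5,y_5$, with $\alpha_z=1$ but $\beta_z=0$, and indeed $d-\length(Q)=1$ is odd. The mechanism you propose to prove it (``swapping the $b$-side arm of $\mathcal{P}_z$ for the $a$-side arm of $\mathcal{P}_z$ reattached at $b$'') is not a valid operation in a fixed tree: an arm rooted at $a$ cannot be ``reattached'' at $b$ because the required edges simply are not present. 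Your argument for $\alpha_z=\alpha_w$ (swap arms between the two paths on the same side of $Q$) \emph{is} legitimate, since the new walk really does exist in $G$ and is a path by tree-acyclicity; the invalid move is only the ``same path, opposite sides'' swap. Fortunately the false claim is not used later, so this is an excisable error rather than a fatal one.

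The vagueness is in the final step: you need to verify that the branch point lands at a position $k\in[3,n-2]$ (equivalently $[3,(n+1)/2]$), and this requires real bookkeeping in the corner cases $\alpha=1$ (where the two $a$-arms being distinct simply means two different leaves hanging off $a=z_2=w_2$, which is \emph{not} a contradiction but the desired conclusion), $\beta=0$ (where $b$ is a shared leaf endpoint), and $\length(Q)=0$ (your four-arm case, where one must argue all four arms have length $d/2$ and then that three pairwise-divergent arms of that length yield $L_{d+1,d/2+1}$). None of these are hard, but the proposal defers all of them. By contrast, the paper sidesteps the whole intersection-subpath abstraction for $n\in\{4,5\}$ by invoking the classical fact that an $L_{5,3}$-free tree is a caterpillar, and for caterpillars the lemma is immediate because any path has at most two leaves and hence any longest path must contain the full central path. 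You mention a ``degree argument'' for caterpillars but do not give it; making that one-line argument explicit, and then invoking the caterpillar reduction for small $n$, would both simplify and complete your proof. For $n\geq 6$, the paper (after the intersection claim) argues from the largest agreement index $a$ and directly reads off $L_{d+1,a}$, which is essentially your endpoint-$a$ analysis with the arm-length symmetry baked in.
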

	\begin{proof}
		Assume that $G$ is a caterpillar graph. Let $\mathcal{P}\colon x_1,\dots, x_{d-1}$ be the central path of $G$ formed by all of its vertices of degree 2 or more. Note that any path $P_n$ in $G$ consists of at least $n-2$ vertices of degree 2 or more. Therefore, any any longest path of $G$ contains $x_1,\dots, x_{d-1}$, and its two ends are a neighbor of $x_1$ and a neighbor of $x_{d-1}$. Thus, the assertion follows.
		

		
		Assume $n\in \{4,5\}$ and $G$ satisfies the $(F_n)$ condition. In particular, $G$ does not contain $L_{5,3}$ as an induced subgraph. Thus $G$ is a caterpillar tree by \cite[Theorem~1]{HS71}, and the result then follows from the previous case. Now we can assume that $n\geq 6$ and $G$ satisfies the $(F_n)$ condition. We have the following claim.
		\begin{claim}\label{clm:z-w-intersect}
			There exist integers $r,s\in [d+1]$ such that $z_r=w_s$.
		\end{claim}
		\begin{proof}[Proof of Claim~\ref{clm:z-w-intersect}]
			Suppose not, i.e., the two paths $z_1,z_2,\dots, z_{d+1}$ and $w_1,w_2,\dots, w_{d+1}$ have no vertex in common. We will derive a contradiction. The path $\mathcal{P}(z_1,w_1)$ must be of the form
			\[
			\mathcal{P}(z_1,w_1)\colon z_1,z_2, \dots, z_u, y_1,\dots, y_w, w_v,w_{v-1},\dots ,w_1 
			\]
			By flipping the indices on the path $z_1,z_2,\dots, z_{d+1}$ if needed, we can assume that $u\geq \frac{d+2}{2}$. Similarly, we can assume that $v\geq \frac{d+2}{2}$. Then we have
			\[
			\length(\mathcal{P}(z_1,w_1)) = u+w+v-1 \geq (d+2) +w -1 = d+w+1 > d,
			\]
			a contradiction, as desired.
		\end{proof}
		If $r=1$, then since $w_s=z_1$ is a leaf, we have $s=1$ or $s=d+1$. By flipping the indices on the path $w_1,w_2,\dots, w_{d+1}$ if needed, we can assume that $s=1$. We then have $z_2=w_2$ as well since $z_1$ is incident to only $z_2$, and $w_1$ to $w_2$. Let $a$ be the largest integer where $z_i=w_i$ for any $i\leq a$. If $a\geq d$, then $w_d=z_d$, as desired. Now we can assume that $2\leq a\leq d-1$. We will derive a contradiction. We first observe that $\{z_{j}\colon j>a\} \cap \{w_{j}\colon j>a\}=\emptyset$, as otherwise it would contradict the definition of $a$ (in the case $z_{a+1}=w_{a+1}$ is in the intersection), or create a cycle in a tree $G$, also a contradiction. If $a=2$, then the path $\mathcal{P}(z_{d+1},w_{d+1})$ is of the form
		\[
		\mathcal{P}(z_{d+1},w_{d+1})\colon z_{d+1},z_d,\dots, z_2,w_3,w_4,\dots, w_{d+1} 
		\]
		and hence is of length $$2(d-1)=d+(d-2)\geq d+(n-1-2)>d,$$
        where the second to last inequality is due to the $(F_n)$ condition. This is a contradiction. Therefore, we can now assume that $3\leq a\leq d-1$. Consider the induced subgraph of $G$ with the vertex set $\{z_i,w_i\colon i\in [d+1]\}$:
		
		\begin{figure}[H]
			\begin{center}
				\begin{tikzpicture}[every node/.style={circle,draw,inner sep=2pt}, scale=1.5]

					\node[label={90: $z_1$}] (z1) at (0,0) {} ;
					\node[label={90: $z_2$}] (z2) at (1,0) {};
					\node[label={90:$z_{a-1}$}] (za-1) at (3,0) {};
                    \node[label={90:$w_a=z_a$}] (za) at (4,0) {};
					\node[label={270:$w_{a-1}$}] (wa-1) at (3,-0.5) {};
					\node[label={270:$w_{a-2}$}] (wa-2) at (2,-0.5) {};
					\node[draw=none] (wdot) at (1,-0.5) {$\cdots$};
					\node[draw=none] (dot1) at (2,0) {$\cdots$};
					\node[draw=none] (dot2) at (5,0) {$\cdots$};
					\node[label=90:$z_d$] (zd) at (6,0) {};
					\node[label=90:$z_{d+1}$] (zd+1) at (7,0) {};
					\node[label=270:$w_1$] (w1) at (0,-0.5) {};
					
					\draw (z1) -- (z2);
					\draw (z2) -- (dot1);
					\draw (dot1) -- (za-1) -- (za) -- (dot2)-- (zd);
					\draw (za) -- (wa-1);
					\draw (wa-1) -- (wa-2);
					\draw (wa-2) -- (wdot);
					\draw (wdot) -- (w1);
					\draw (zd) -- (zd+1);
				\end{tikzpicture}
			\end{center}
			\caption{$L_{d+1,a}$ is induced by $\{z_i,w_i: i\in[d+1]\}$}
		\end{figure}
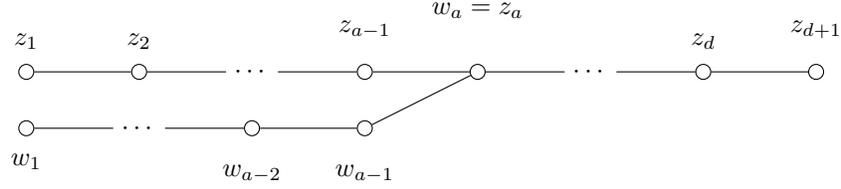
		
		It is clear that this graph is isomorphic to $L_{d+1,a}$, where $a\in [3,d-1]$. Since $d+1\geq n$ (due to the $(F_n)$ condition), it contains $L_{n,a}$ as an induced subgraph. This contradicts the fact that $G$ satisfies the $(F_n)$ condition, as desired.
	\end{proof}
	
	\begin{corollary}\label{cor:trim-well-defined}
		Let $G$ be a caterpillar tree or a tree that satisfies the $(F_n)$ condition where $n\geq 4$ is a positive integer. Then for any two longest paths $z_1,z_2,\dots, z_{d+1}$ and $w_1,w_2,\dots, w_{d+1}$ of $G$, we have $\trim(G,z_1,z_2,\dots, z_{d+1}) = \trim(G,w_1,w_2,\dots, w_{d+1})$. In other words, in this case, $\trim$ depends only on $G$. 
	\end{corollary}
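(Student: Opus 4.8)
The plan is to reduce everything to the structural dichotomy in Lemma~\ref{lem:2-longest-paths} and then exploit uniqueness of paths in a tree. First I would record the elementary fact that both ends of a longest path in a tree are leaf vertices: if $z_1$ had a neighbor $u\neq z_2$, then $u\notin\{z_2,\dots,z_{d+1}\}$ (otherwise $G$ would contain a cycle), so $u,z_1,z_2,\dots,z_{d+1}$ would be a path of length $d+1>\diam(G)$, a contradiction; the same applies to $z_{d+1}$. Hence $N_G[z_1]=\{z_1,z_2\}\subseteq N_G[z_2]$ and $N_G[z_{d+1}]=\{z_d,z_{d+1}\}\subseteq N_G[z_d]$, so that
\[
\bigcup_{i=1}^{d+1}N_G[z_i]=\bigcup_{i=2}^{d}N_G[z_i],
\]
and symmetrically $\bigcup_{i=1}^{d+1}N_G[w_i]=\bigcup_{i=2}^{d}N_G[w_i]$. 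Thus it suffices to show $\bigcup_{i=2}^{d}N_G[z_i]=\bigcup_{i=2}^{d}N_G[w_i]$.

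Next I would invoke Lemma~\ref{lem:2-longest-paths}. If case~(2) of that lemma occurs, replace the path $w_1,w_2,\dots,w_{d+1}$ by its reversal $w_{d+1},\dots,w_2,w_1$; this leaves the set $\bigcup_{i=1}^{d+1}N_G[w_i]$ unchanged and places us in case~(1), so we may assume $z_2=w_2$ and $z_d=w_d$. Now $z_2,z_3,\dots,z_d$ and $w_2,w_3,\dots,w_d$ are both paths in $G$ joining $z_2=w_2$ to $z_d=w_d$; since $G$ is a tree, such a path is unique, which forces $z_i=w_i$ for every $i\in[2,d]$. Combining this with the first paragraph yields $\bigcup_{i=1}^{d+1}N_G[z_i]=\bigcup_{i=1}^{d+1}N_G[w_i]$, and since an induced subgraph of $G$ is determined by its vertex set, we conclude that $\trim(G,z_1,\dots,z_{d+1})=\trim(G,w_1,\dots,w_{d+1})$.

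There is no genuine obstacle here beyond Lemma~\ref{lem:2-longest-paths}, which does all the heavy lifting; the only points requiring a little care are the harmless reindexing in case~(2) and the observation (recorded above via the leaf fact) that the two endpoints of a longest path contribute nothing new to the union of closed neighborhoods. The hypothesis that $G$ is a caterpillar tree or satisfies $(F_n)$ enters only through Lemma~\ref{lem:2-longest-paths}.
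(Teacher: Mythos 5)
Your proof is correct and follows essentially the same route as the paper: invoke Lemma~\ref{lem:2-longest-paths}, flip indices to land in case~(1), use uniqueness of tree paths to get $z_i=w_i$ for $i\in[2,d]$, and conclude that the unions of closed neighborhoods agree. The one place where you are slightly more explicit than the paper is the observation that $z_1,z_{d+1},w_1,w_{d+1}$ are leaves whose closed neighborhoods are already contained in those of $z_2,z_d,w_2,w_d$; the paper treats this as immediate, while you spell it out, which is a harmless and arguably welcome clarification.
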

	\begin{proof}
		By flipping the indices if necessary, Lemma~\ref{lem:2-longest-paths} implies that $z_2=w_2$ and $z_d=w_d$. Therefore we also have $z_i=w_i$ for any $i\in [2,d]$ as otherwise $G$ would contain a cycle. By definition, the two graphs $\trim(G,z_1,z_2,\dots, z_{d+1})$ and $\trim(G,w_1,w_2,\dots, w_{d+1})$ are then exactly the same, as desired.
	\end{proof}
	
	Due to this result, we will simply use $\trim(G)$ when $G$ is a caterpillar tree or a tree that satisfies the $(F_n)$ condition where $n\geq 4$. It is worth noting that if $G$ is a caterpillar graph, then it is straightforward that $\trim(G)=G$.
	
	The following lemma is the key to our results, essentially reducing our problem from trees to caterpillar trees.
	
	
	\begin{lemma}\label{lem:G->trim(G)}
		Let $G$ be a tree that satisfies the $(F_n)$ condition where $n\geq 4$ is a positive integer. Then $J_n(G)=J_n(\trim(G))$.
	\end{lemma}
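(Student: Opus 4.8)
The plan is to show that every $n$-path of $G$ uses only vertices of $\trim(G)$, which gives $J_n(G) = J_n(\trim(G))$ directly, since $\trim(G)$ is an induced subgraph and hence $J_n(\trim(G)) = J_n(G)^{\leq \prod_{x \in V(\trim(G))}x} \subseteq J_n(G)$, with the reverse inclusion being the content we must prove. So fix a longest path $\mathcal{P}\colon v_1, v_2, \dots, v_{d+1}$ of $G$ (with $d = \diam(G)$), so that $V(\trim(G)) = \bigcup_{i=1}^{d+1} N_G[v_i]$. Let $\mathcal{Q}\colon u_1, u_2, \dots, u_n$ be an arbitrary $n$-path in $G$; I must show every $u_j \in V(\trim(G))$.

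First I would set up the intersection structure between $\mathcal{Q}$ and the central path $v_2, \dots, v_d$ of $\trim(G)$. Since $G$ is a tree, $\mathcal{Q}$ meets the path $v_1, \dots, v_{d+1}$ in a (possibly empty) subpath; and by the length argument already used in the proof of Claim~\ref{clm:z-w-intersect}, the paths $\mathcal{Q}$ and $\mathcal{P}$ must actually intersect — otherwise joining an end of $\mathcal{Q}$ to an end of $\mathcal{P}$ through the tree produces a path longer than $d$, using $n - 1 \le d$ (which holds by the $(F_n)$ condition) and the fact that one can orient each path so that at least $\lceil (d+2)/2 \rceil$ and $\lceil (n+1)/2 \rceil$ vertices respectively lie before the connecting segment. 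So let $v_a, v_{a+1}, \dots, v_b$ be the maximal common subpath of $\mathcal{Q}$ and $\mathcal{P}$. The vertices of $\mathcal{Q}$ lying on this segment are clearly in $V(\trim(G))$. It remains to control the two "tails" of $\mathcal{Q}$ hanging off $v_a$ and off $v_b$.

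The heart of the argument is that each such tail has length at most $1$. Consider the tail of $\mathcal{Q}$ at the $v_a$-end: suppose $\mathcal{Q}$ continues $\dots, v_{a+1}, v_a, t_1, t_2, \dots$ with $t_1 \notin \{v_{a-1}, v_{a+1}\}$. If this tail has length $\geq 2$, i.e. $t_2$ exists, then I claim we can exhibit a forbidden induced subgraph. If $a \geq 2$, the path $v_1, \dots, v_a, t_1, t_2, \dots$ together with the segment $v_a, v_{a+1}, \dots, v_{d+1}$ produces, on the vertex set $\{v_1,\dots,v_{d+1}\} \cup \{t_1, t_2, \dots\}$, a graph that contains $L_{d+1, a'}$ as an induced subgraph for the appropriate branch length $a'$ with $a' \in [3, d-1]$ once the tail is long enough — exactly mirroring the $L_{d+1,a}$ argument at the end of the proof of Lemma~\ref{lem:2-longest-paths}; since $d + 1 \geq n$ this contains a forbidden $L_{n,k}$. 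The boundary cases ($a = 1$, or $a$ near $d$, or the tail meeting $\mathcal{P}$ again) need separate handling: if $a = 1$ then $\mathcal{Q}$ would give a path through $t_2, t_1, v_1, \dots, v_{d+1}$ of length $\geq d+1$, contradicting maximality of $\diam(G)$; and a tail of $\mathcal{Q}$ that reaches back to $\mathcal{P}$ would create a cycle. Doing this on both ends shows each tail has length $\leq 1$, so every vertex of $\mathcal{Q}$ is either on $\mathcal{P}$ or is a neighbor of some $v_i$ with $i \in [2,d]$; either way it lies in $V(\trim(G))$.

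The main obstacle I anticipate is the careful bookkeeping in the "tail length $\leq 1$" step: one must check that when a tail has length exactly $2$, the branch attaches to $\mathcal{P}$ at an index $a$ that is genuinely in the range making $L_{d+1,a}$ (or its reflection) a legitimate forbidden structure — in particular that $a$ is not too close to either end of $\mathcal{P}$, handling those extreme positions by the diameter-maximality contradiction instead. One also needs $\mathcal{Q}$ long enough relative to $d$; since $\length(\mathcal{Q}) = n-1$ and $d \leq 2n-1$ by Observation~\ref{obser}, the tail plus segment lengths can be balanced, but the inequalities deserve to be written out explicitly. Everything else is the same mix of "tree $\Rightarrow$ unique paths, no cycles" and "path too long $\Rightarrow$ contradicts $\diam$" reasoning already deployed in Lemma~\ref{lem:2-longest-paths}.
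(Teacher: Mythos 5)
Your overall strategy is the same as the paper's: show that the $n$-path $\mathcal{Q}$ must intersect the chosen longest path $\mathcal{P}$ (matching the paper's Claim~\ref{clm:n-path-does-not-intersect-longest-path}), and then rule out a tail of $\mathcal{Q}$ of length at least $2$ hanging off $\mathcal{P}$ by exhibiting a forbidden $L_{n,k}$. The first step is fine. But your key construction for the second step does not close, and your fallback does not cover the missing case.

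You propose to find $L_{d+1,a'}$ inside the union of $\mathcal{P}$ and the tail, and then extract an $L_{n,k}$ from it since $d+1\geq n$. The issue is that $L_{d+1,a'}$ requires a branch of $a'-1$ vertices at position $a'$, where $a'$ equals $a$ or $d+2-a$ (the distance from $v_a$ to an end of $\mathcal{P}$). If the tail has length exactly $2$, you need $a'=3$, hence $a\in\{3,\,d-1\}$. You explicitly defer the case ``$a$ not too close to either end'' to the diameter-maximality contradiction, but that argument only rules out $a\leq 2$ and $a\geq d$. A tail of length $2$ attached at $v_a$ with $4\leq a\leq d-2$ is neither handled by $L_{d+1,a'}$ (the tail is too short to be its branch) nor by the diameter bound. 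This range is reachable: for example $n=6$, $d=6$, tail of length $2$ at $v_4$ is consistent with all the constraints you set up, yet $\{v_1,\dots,v_7\}\cup\{t_1,t_2\}$ contains no $L_{7,a'}$ with $a'\in[3,5]$ because the branch at $v_4$ has only $2$ vertices.

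The paper closes this case (Case~2 of the proof of Lemma~\ref{lem:G->trim(G)}) by a genuinely different choice of arms: instead of taking $\mathcal{P}$ as the main path of the forbidden $L$, it takes the $n$-path $\mathcal{Q}=z_1,\dots,z_n$ itself as the main path, and uses a segment of $\mathcal{P}$ going from $v_a=w_s$ \emph{away} from the direction $\mathcal{Q}$ continues, namely $w_{s-1},\dots,w_{s-u+1}$, as the branch. The attachment point $z_u$ then sits at position $u$ of the $n$-path with a branch of exactly $u-1$ vertices, giving $L_{n,u}$ with $u\in[3,n-2]$; crucially $w_{s-u+1},\dots,w_{s-1}$ exist because $u\leq s$, which the paper establishes from the diameter bound. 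Your sketch never considers making $\mathcal{Q}$ the main path of the $L$, so this case is a real hole in your argument. (An alternative fix, which you also do not state, is to find $L_{n,k}$ directly using a length-$n$ window of $\mathcal{P}$ centered appropriately; this does work but requires the inequality $a\leq u+d+1-n$, which one has to extract from the fact that $\mathcal{Q}$ fits inside a tree of diameter $d$, and that bookkeeping is exactly what you left undone.) You should also note that the paper dispatches $n\in\{4,5\}$ immediately via \cite[Theorem~1]{HS71}, since there $(F_n)$ already forces $G$ to be a caterpillar; your sketch does not use this shortcut, but that is a minor point.
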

	\begin{proof}
		If $n\in \{4,5\}$ then the $(F_n)$ condition implies that $G$ is a caterpillar tree by \cite[Theorem~1]{HS71}. Then $\trim(G)=G,$ and the result follows trivially. Now we can assume that $n\geq 6$.
		
		Set $d=\diam(G)$. Let $w_1,w_2,\dots, w_{d+1}$ be a longest path in $G$.  
		It is straightforward to see that $\mingens(J_n(\trim(G)))\subseteq \mingens(J_n(G))$. Suppose that we do not have the equality. Then there exists a path $z_1,z_2,\dots, z_n$ in $G$ where $z_p\notin N_G(w_i)$  for any $i\in [d+1]$, for some $p\in [n]$. We have the following claim.
		\begin{claim}\label{clm:n-path-does-not-intersect-longest-path}
			$\{w_1,w_2,\dots , w_{d+1}\}\cap \{z_1,z_2,\dots, z_n\}\neq \emptyset$.
		\end{claim}
		\begin{proof}[Proof of Claim~\ref{clm:n-path-does-not-intersect-longest-path}]
			Suppose otherwise that $\{w_1,w_2,\dots , w_{d+1}\}\cap \{z_1,z_2,\dots, z_n\}= \emptyset$. If there is no edge that connects a vertex in $\{w_1,w_2,\dots, w_{d+1}\}$ to a vertex in $\{z_1,z_2,\dots, z_n\}$, then the induced subgraph of $G$ with the vertex set $\{w_1,w_2,\dots, w_n\}\cup \{z_1,z_2,\dots, z_n\}$ is $P_n+P_n$, a contradiction. Thus we can assume that there is an edge $w_sz_r\in E(G)$ for some $s\in [d+1]$ and $r\in [n]$. By flipping the indices on the two paths if needed, we can assume that $s\geq \frac{d+2}{2}$ and $r\leq \frac{n+1}{2}$. First observe that
			\begin{equation*}
				s\geq \frac{d+2}{2}\geq \frac{(n-1)+2}{2}=\frac{n+1}{2}\geq r. \tag{1}
			\end{equation*}
			
            Now, if $r\geq 3$, then we can consider the induced subgraph of $G$ with the vertex set $\{z_1,z_2,\dots, z_n\}\cup \{w_{s-r+2}, w_{s-r+3}, \dots, w_s\}$:
			
			\begin{figure}[H]
			\begin{center}
				\begin{tikzpicture}[every node/.style={circle,draw,inner sep=2pt}, scale=1.5]

					\node[label={90: $z_1$}] (z1) at (0,0) {} ;
					\node[label={90: $z_2$}] (z2) at (1,0) {};
					\node[label={90:$z_{r-1}$}] (za-1) at (3,0) {};
                    \node[label={90:$z_r$}] (za) at (4,0) {};
					\node[label={270:$w_{s}$}] (wa-1) at (3,-0.5) {};
					\node[label={270:$w_{s-1}$}] (wa-2) at (2,-0.5) {};
					\node[draw=none] (wdot) at (1,-0.5) {$\cdots$};
					\node[draw=none] (dot1) at (2,0) {$\cdots$};
					\node[draw=none] (dot2) at (5,0) {$\cdots$};
					\node[label=90:$z_{n-1}$] (zd) at (6,0) {};
					\node[label=90:$z_{n}$] (zd+1) at (7,0) {};
					\node[label=270:$w_{s-r+2}$] (w1) at (0,-0.5) {};
					
					\draw (z1) -- (z2);
					\draw (z2) -- (dot1);
					\draw (dot1) -- (za-1) -- (za) -- (dot2)-- (zd);
					\draw (za) -- (wa-1);
					\draw (wa-1) -- (wa-2);
					\draw (wa-2) -- (wdot);
					\draw (wdot) -- (w1);
					\draw (zd) -- (zd+1);
				\end{tikzpicture}
			\end{center}
			\caption{The subgraph induced by the vertex set $\{z_i,w_j: i\in[n], j\in [d+1]\}$}
		\end{figure}

            It is clear that this graph is isomorphic to $L_{n,r}$ where we already know that $r\in [3,(n+1)/2]$. This contradicts the assumption that $G$ satisfies the $(F_n)$ condition.

            Next, we assume that $r\leq 2$. Then, we must have $s\leq d-1$, as otherwise, consider the path  $w_1,
            w_2, \dots , w_{s}, z_r, z_{r+1}, \dots ,z_n$, which is of length $s+n-r\geq d+n-r>d$, as $r<n$. This is a contradiction to the fact that $\diam(G)=d$.  As $n-1\leq d$, the longest path $w_1,
            w_2, \dots , w_{d+1}$ contains a path of length $n-1$, say $y_1, y_{2}, \dots , y_{n}$. Moreover, since $\frac{d+2}{2}\leq s\leq d-1$, we can choose such a path where $y_t=w_s$ and $t\in [3,n-2]$. Then we can consider the induced subgraph with the vertex set $\{y_1,y_2,\dots , y_n, z_r, z_{r+1}, \dots , z_{n}\}:$

            \begin{figure}[H]
			\begin{center}
				\begin{tikzpicture}[every node/.style={circle,draw,inner sep=2pt}, scale=1.5]

					\node[label={90: $y_1$}] (z1) at (0,0) {} ;
					\node[label={90: $y_2$}] (z2) at (1,0) {};
					\node[label={90:$y_{t}$}] (za-1) at (3,0) {};
                    \node[label={90:$y_{t+1}$}] (za) at (4,0) {};
					\node[label={270:$z_{r}$}] (wa-1) at (4,-0.5) {};
					\node[label={270:$z_{r+1}$}] (wa-2) at (5,-0.5) {};
					\node[draw=none] (wdot) at (6,-0.5) {$\cdots$};
					\node[draw=none] (dot1) at (2,0) {$\cdots$};
					\node[draw=none] (dot2) at (5,0) {$\cdots$};
					\node[label=90:$y_{n-1}$] (zd) at (6,0) {};
					\node[label=90:$y_{n}$] (zd+1) at (7,0) {};
					\node[label=270:$z_{n}$] (w1) at (7,-0.5) {};
					
					\draw (z1) -- (z2);
					\draw (z2) -- (dot1);
					\draw (dot1) -- (za-1) -- (za) -- (dot2)-- (zd);
					\draw (za-1) -- (wa-1);
					\draw (wa-1) -- (wa-2);
					\draw (wa-2) -- (wdot);
					\draw (wdot) -- (w1);
					\draw (zd) -- (zd+1);
				\end{tikzpicture}
			\end{center}
			\caption{The subgraph induced by the vertex set $\{z_i,w_j: i\in[n], j\in [d+1]\}$}
		\end{figure}
            
            It is clear that this graph contains a induced subgraph isomorphic to $L_{n,t}$ for some $t\in [3,n-2]$. This contradicts the assumption that $G$ satisfies the $(F_n)$ condition.
		\end{proof}
		
		By the above claim, the induced subgraph of $G$ with the vertex set $\{w_i,z_j\colon i\in [d+1],j\in[n]\}$ is a tree. Without loss of generality, we can assume that $V(G)=\{w_i,z_j\colon i\in [d+1],j\in[n]\}$, as this tree has the same diameter as the original, and thus the condition $(F_n)$ is preserved. Set
		\begin{align*}
			u&\coloneqq \min \{ i\in [n] \colon z_i\in \{w_1,w_2,\dots, w_{d+1}\} \},\\
			v&\coloneqq \max \{ i\in [n] \colon z_i\in \{w_1,w_2,\dots, w_{d+1}\} \}.
		\end{align*}
		Both $u$ and $v$ exist due to Claim~\ref{clm:n-path-does-not-intersect-longest-path}. We then have $u\leq v$ by their construction, and $z_i\in \{w_1,w_2,\dots, w_{d+1}\}$ for any $i\in [u,v]$, as otherwise $G$ would contain a cycle graph. Also by the definition of $u$ and $v$, we have $z_i\notin \{w_1,w_2,\dots,w_{d+1} \}$ for any $i\in [1,u)\cup (v,n]$. By the definition of $p$, we have $p\in [1,u-2]\cup [v+2,n]$. By flipping the indices if needed, we can assume that $p\in [1,u-2]$. In particular, this implies that $u\geq 3$. 
		
		Let $s$ be the index where $w_s=z_u$. Flipping the indices if needed, we can assume that $s\leq \frac{d+2}{2}$. Consider the induced path $z_1,z_2,\dots, z_u=w_s,w_{s+1},\dots, w_{d+1}$. The length of this path is at most $d$. Hence $u+d-s\leq d$, or equivalently $u\leq s$. We have two cases.    
		
		\textbf{Case 1:} Assume that $u\geq n-1$. We first establish some inequalities. We have
		\[
		n-1\leq u\leq s\leq \frac{d+2}{2},
		\]
		which implies that $d\geq 2n-4$. Moreover, we have
		\[
		s\leq \frac{d+2}{2}\leq \frac{(2n-1)+2}{2} =n+\frac{1}{2},
		\]
		which implies that $s\leq n$. Consider the induced subgraph of $G$ with the vertex set 
        \[
        \{ z_{u-n+4}, z_{u-n+5}, \dots, z_{u-1} \} \cup \{w_{s-n+4}, w_{s-n+5}, \dots, w_{s+3}\}.
        \]

        \begin{figure}[H]
			\begin{center}
				\begin{tikzpicture}[every node/.style={circle,draw,inner sep=2pt}, scale=1.5]

					\node[label={90: $w_{s-n+4}$}] (z1) at (1,0) {} ;
					\node[label={90: $w_{s-n+5}$}] (z2) at (2,0) {};
					\node[label={90:$w_{s-1}$}] (za-1) at (4,0) {};
                    \node[label={90:$w_s=z_s$}] (za) at (5,0) {};
					\node[label={270:$z_{s-1}$}] (wa-1) at (4,-0.5) {};
					\node[label={270:$z_{s-2}$}] (wa-2) at (3,-0.5) {};
					\node[draw=none] (wdot) at (2,-0.5) {$\cdots$};
					\node[draw=none] (dot1) at (3,0) {$\cdots$};
					\node[draw=none] (dot2) at (6,0) {$\cdots$};
					\node[label=90:$w_{s+3}$] (zd+1) at (7,0) {};
					\node[label=270:$z_{u-n+4}$] (w1) at (1,-0.5) {};
					
					\draw (z1) -- (z2);
					\draw (z2) -- (dot1);
					\draw (dot1) -- (za-1) -- (za) -- (dot2)-- (zd+1);
					\draw (za) -- (wa-1);
					\draw (wa-1) -- (wa-2);
					\draw (wa-2) -- (wdot);
					\draw (wdot) -- (w1);
				\end{tikzpicture}
			\end{center}
			\caption{The subgraph induced by the vertex set $\{z_i,w_j: i\in[n], j\in [d+1]\}$}
		\end{figure}
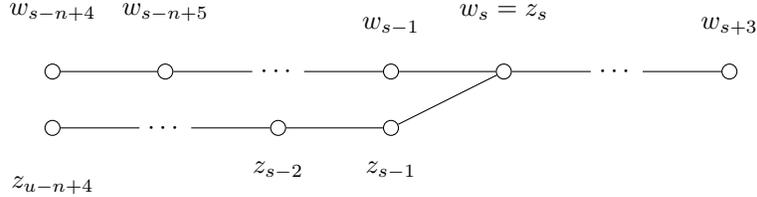
        
		The indices make sense, since
		\begin{align*}
			s+3&\leq n+3 = n+6-3 \leq n+(n)-3=2n-3\leq d+1,\\
			s-n+4 &= (s-n+1)+3 \geq 0+3 \geq 1,\\
			u-n+4 &= (u-n+1) +3 \geq 0 +3\geq 1.
		\end{align*}
		This graph is isomorphic to $L_{n,n-3}$ where $n-3\in [3,n-2]$. This contradicts the assumption that $G$ satisfies the $(F_n)$ condition, as desired.
		
		\textbf{Case 2:} Assume that $u\leq n-2$. Consider the induced subgraph of $G$ with the vertex set $\{ z_1,z_2,\dots, z_n \} \cup \{w_{s-u+1}, w_{s-u+2}, \dots, w_{s-1}\}$: 

        \begin{figure}[H]
			\begin{center}
				\begin{tikzpicture}[every node/.style={circle,draw,inner sep=2pt}, scale=1.5]

					\node[label={90: $z_1$}] (z1) at (0,0) {} ;
					\node[label={90: $z_2$}] (z2) at (1,0) {};
					\node[label={90:$z_{r-1}$}] (za-1) at (3,0) {};
                    \node[label={90:$z_r$}] (za) at (4,0) {};
					\node[label={270:$w_{s-1}$}] (wa-1) at (3,-0.5) {};
					\node[label={270:$w_{s-2}$}] (wa-2) at (2,-0.5) {};
					\node[draw=none] (wdot) at (1,-0.5) {$\cdots$};
					\node[draw=none] (dot1) at (2,0) {$\cdots$};
					\node[draw=none] (dot2) at (5,0) {$\cdots$};
					\node[label=90:$z_{n-1}$] (zd) at (6,0) {};
					\node[label=90:$z_{n}$] (zd+1) at (7,0) {};
					\node[label=270:$w_{s-u+1}$] (w1) at (0,-0.5) {};
					
					\draw (z1) -- (z2);
					\draw (z2) -- (dot1);
					\draw (dot1) -- (za-1) -- (za) -- (dot2)-- (zd);
					\draw (za) -- (wa-1);
					\draw (wa-1) -- (wa-2);
					\draw (wa-2) -- (wdot);
					\draw (wdot) -- (w1);
					\draw (zd) -- (zd+1);
				\end{tikzpicture}
			\end{center}
			\caption{The subgraph induced by the vertex set $\{z_i,w_j: i\in[n], j\in [d+1]\}$}
		\end{figure}
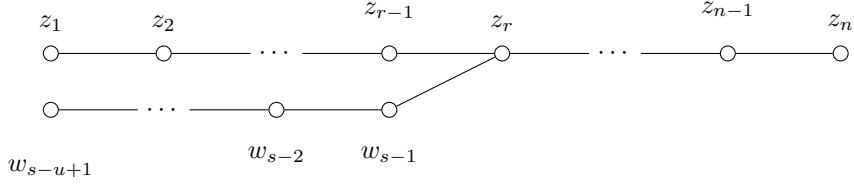
		
		The indices are clearly well-defined. This graph is isomorphic to $L_{n,u}$ where $u\in [3,n-2]$. This contradicts the assumption that $G$ satisfies the $(F_n)$ condition, as desired. This concludes the proof.
	\end{proof}

	\section{Trees whose path ideals have linear quotients}\label{sec:main}

    In this section, we integrate the results established in the preceding sections to establish the main theorem of this article. The objective is to prove a stronger version of Theorem~\ref{thm:main}:
	
    \begin{theorem}\label{thm:main-section-5}
		Let $G$ be a tree and $n\geq 4$ a positive integer. Then the following statements are equivalent:
        \begin{enumerate}
            \item $J_n(G)$ has linear quotients;
            \item $J_n(G)$ has linear resolution;
            \item either of the following holds:
            \begin{itemize}
                \item[(i)] $n=4$, and  $G$ does not contain $P_4+P_4$ or $L_{5,3}$ as an induced subgraph;
                \item[(ii)] $n\geq 5$, and $G$ does not contain $P_n+P_n$ or $L_{n,k}$ as an induced subgraph for  any $k\in [3,\ (n+1)/2]$.
            \end{itemize}
            \item Either $\diam(G)<n-1$ or $G$ satisfies the $(F_n)$ condition.
        \end{enumerate}
	\end{theorem}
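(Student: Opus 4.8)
The plan is to establish the cycle of implications $(1)\Rightarrow(2)\Rightarrow(3)\Leftrightarrow(4)\Rightarrow(1)$. I would begin by isolating the degenerate case: if $\diam(G)<n-1$ then $J_n(G)=(0)$ by Observation~\ref{obser}, and all four statements hold vacuously, so assume henceforth $\diam(G)\geq n-1$, whence $J_n(G)$ is equigenerated in degree $n$. Now $(1)\Rightarrow(2)$ is exactly Lemma~\ref{lem:LQ-implies-LR}.

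For $(2)\Rightarrow(3)$ I would argue contrapositively. If $G$ contains as an induced subgraph one of the graphs $H$ named in the negation of $(3)$ --- namely $P_n+P_n$, or (for $n=4$) $L_{5,3}$, or (for $n\geq 5$) some $L_{n,k}$ with $k\in[3,(n+1)/2]$ --- then $J_n(G)^{\leq m}=J_n(H)$ for $m=\prod_{v\in V(H)}v$, and $J_n(H)$ fails to have linear resolution by Lemma~\ref{lem:forbidden-Pn+Pn}, Lemma~\ref{lem:forbidden-structures} or Lemma~\ref{lem:forbidden-for-J4}; hence $J_n(G)$ fails to have linear resolution by Lemma~\ref{lem:linear-resolution-induced}. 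The equivalence $(3)\Leftrightarrow(4)$ is purely combinatorial: absence of an induced $P_n+P_n$ forces $\diam(G)\leq 2n-1$ by Observation~\ref{obser}, and by Lemma~\ref{lem:minimal-Lnk} forbidding $L_{n,k}$ for $k\in[3,(n+1)/2]$ is the same as forbidding it for all $k\in[3,n-2]$, so under the standing assumption $\diam(G)\geq n-1$ statement $(3)$ is literally the $(F_n)$ condition.

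The real content is $(4)\Rightarrow(1)$. Assuming the $(F_n)$ condition, Lemma~\ref{lem:G->trim(G)} gives $J_n(G)=J_n(\trim(G))$, and $C\coloneqq\trim(G)$ is a caterpillar tree; being an induced subgraph of $G$ of the same diameter, $C$ again satisfies $(F_n)$. In fact I would first observe that a caterpillar never contains any $L_{n,k}$ as an induced subgraph --- each such graph has a vertex incident to three branches of length $\geq 2$, whereas an induced subgraph of a caterpillar is again a disjoint union of caterpillars, and in a caterpillar a vertex of degree $\geq 3$ lies on the spine and carries at most two branches of length $\geq 2$ --- so for $C$ the $(F_n)$ condition reduces to: $C$ is a caterpillar with $\diam(C)\in[n-1,2n-1]$ and no induced $P_n+P_n$. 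It then remains to prove that $J_n(C)$ has linear quotients for every such caterpillar.

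For this last step I would use the spine notation of Notation~\ref{nota:caterpillar} and describe $\mingens(J_n(C))$ explicitly: each $n$-path of $C$ runs along a contiguous block $x_i,\dots,x_j$ of the spine and is capped at each end by either a spine vertex or a leaf neighbor, which groups the generators into a few shapes according to this block and the choice of caps. I would then order $\mingens(J_n(C))$ by the lexicographic order of $\Bbbk[x_1,\dots]$ with $x_{d-1}>x_{d-2}>\cdots>x_1>(\text{leaf variables})$, breaking remaining ties arbitrarily, and verify that each colon ideal $(m_1,\dots,m_{k-1}):m_k$ is generated by variables. The point --- already visible for the line $P_m$, where the analogous order has linear quotients precisely when $m\leq 2n-1$ --- is that the bound $\diam(C)\leq 2n-1$ together with the absence of an induced $P_n+P_n$ rules out exactly the ``gapped'' configurations that would otherwise contribute a degree-$n$ generator to some colon ideal. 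I expect this verification to be the main obstacle: choosing the order so that it behaves uniformly across the shapes of generators, and controlling which interior spine vertices may carry leaf neighbors under the $P_n+P_n$-free hypothesis; I would likely extract one or two structural lemmas about caterpillars satisfying $(F_n)$ before carrying it out.
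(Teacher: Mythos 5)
Your overall architecture matches the paper's exactly: the cycle $(1)\Rightarrow(2)\Rightarrow(3)\Leftrightarrow(4)\Rightarrow(1)$, with $(1)\Rightarrow(2)$ from Lemma~\ref{lem:LQ-implies-LR}, $(2)\Rightarrow(3)$ via the restriction Lemma~\ref{lem:linear-resolution-induced} together with Lemmas~\ref{lem:forbidden-Pn+Pn}, \ref{lem:forbidden-structures} and \ref{lem:forbidden-for-J4}, $(3)\Leftrightarrow(4)$ from Observation~\ref{obser} and Lemma~\ref{lem:minimal-Lnk}, and $(4)\Rightarrow(1)$ by trimming to a caterpillar via Lemma~\ref{lem:G->trim(G)}. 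Your interpolated observation that no caterpillar contains any $L_{n,k}$ as an induced subgraph (since $x_k$ in $L_{n,k}$ carries three branches of length $\geq 2$) is correct and pleasant; the paper reaches the same simplification differently, invoking the Harary--Schwenk characterization in small $n$ and relying implicitly on the fact that $\trim(G)$, being an induced subgraph of $G$ of equal diameter, inherits $(F_n)$.

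The gap is that you stop exactly where the paper's real work begins. After the reduction to caterpillars, the paper establishes linear quotients via three separate propositions (Propositions~\ref{prop:LQ-2n-3}, \ref{prop:LQ-2n-2-alternative}, \ref{prop:LQ-2n-1}), keyed to $\diam\leq 2n-3$, $\diam=2n-2$, and $\diam=2n-1$; in the latter two cases the no-$P_n+P_n$ hypothesis is converted into the structural constraints $LN_G(x_{n-2})=\emptyset$, resp. $LN_G(x_{n-2})=LN_G(x_{n+1})=\emptyset$, and these constraints are used pointedly inside the colon-ideal computations. You anticipate needing ``one or two structural lemmas'' but do not state or prove them. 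Moreover, your proposed monomial order (all spine variables above all leaf variables, spine in descending index) is genuinely different from the paper's interleaved order $x_1\succ LN_G(x_1)\succ x_2\succ LN_G(x_2)\succ\cdots$. The interleaving is not cosmetic: in the paper's Case 1/Case 2 dichotomy it guarantees that when $m'>_{\lex} m$ the offending variable in $m'\colon m$ can always be traded for one of $x_{i-1}$, a smaller-index leaf at $x_i$, or a smaller-index leaf at $x_{i+n-3}$. Your order may also work (it does in small examples), but that is not self-evident and would need the same careful case-by-case verification. As written, the proposal establishes the easy implications and the reduction but leaves the central verification --- the entire content of Propositions~\ref{prop:LQ-2n-3}--\ref{prop:LQ-2n-1} --- as an unproven expectation. (A small slip: for the path $P_m$, $J_n(P_m)$ has linear resolution iff $m\leq 2n$, equivalently $\diam(P_m)\leq 2n-1$; you wrote $m\leq 2n-1$.)
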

    
    The hard part of the proof is the implication $(4)\Rightarrow (1)$. We dedicate most of this section to a proof of this part. By Lemma~\ref{lem:G->trim(G)}, it suffices to consider the case when $G=\trim(G)$, that is, when $G$ is a caterpillar tree.
    
    For the rest of this section, we will use the same notations as given in Notations~\ref{nota:caterpillar} whenever $G$ is a caterpillar tree.     Let $G$ be a caterpillar tree of diameter $d$, and $x_1,\dots, x_{d-1}$ the induced path formed by all vertices of degree $2$ or more of $G$. Recall that, for $1\leq i\leq d-1$, $LN_G(x_i)\eqqcolon\{x_{i1},\dots, x_{il_i}\}$ are called the leaf neighbors of $x_i$,  where $l_1\geq 1, l_{d-1}\geq 1$, and $l_i\geq 0$ if $i\in[2,d-2]$. We impose the following total order on $V(G)$:
		\[
		x_1\succ LN_G(x_1)\succ x_2\succ LN_G(x_2) \succ \cdots \succ  x_{d-1}\succ LN_G(x_{d-1})
		\]
		where $LN_G(x_i)=\{x_{i1}\succ x_{i2}\succ\cdots \succ x_{il_i}\}$. Here by $x\succ \Sigma$ where $\Sigma$ is a set of variables, we mean $x\succ y$ for any $y\in \Sigma$. We now let $(>_{\lex})$ denote the lex total ordering using $(\succ)$ on the set of all monomials in $V(G)$. For each $i\in [1,d-1]$, set $x_{i, \ l_i+1}\coloneqq x_{i+1}$. The idea is that $x_{i+1}$ is also a neighbor of $x_i$, and in our ordering, we have $x_{i1}\succ x_{i2}\succ \cdots \succ x_{i l_i} \succ x_{i, l_i+1}$. 
		
		For a monomial ideal $I$ and a monomial $m$, we denote by $I_{>_{lex}m}$ to be the ideal generated by $m' \in \mingens (I)$ such that $m'>_{lex} m$.

	\begin{remark}
		In our notations, there can be the case of ``dummy" variables. We will treat them as empty set, similar to the common notation that $\{x_1,x_2\dots, x_n\}=\emptyset$ when $n=0$.
	\end{remark}

    We begin with a description of $\mingens(J_n(G))$, where $G$ is a caterpillar tree.
	\begin{lemma}\label{lem:gens-Jn-caterpillar}
		Let $G$ be a caterpillar tree and $n\geq 2$ a positive integer. Then the minimal monomial generators of $J_n(G)$ are of the form $yx_ix_{i+1}\cdots x_{i+n-3}z$ where $i\in [1,\ d-n+2]$, $y\in LN_G(x_i)\cup \{x_{i-1}\}$, and $z\in  LN_G(x_{i+n-3})\cup \{x_{i+n-2}\}$.
	\end{lemma}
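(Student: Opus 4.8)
The plan is to translate the combinatorics of $n$-paths in a caterpillar tree directly into the asserted monomial description, exploiting the defining feature of a caterpillar: a vertex has degree at least $2$ exactly when it lies on the central path $x_1,\dots,x_{d-1}$. I would first record the trivial but useful point that $J_n(G)$ is equigenerated in degree $n$, since every generator is the product of the $n$ pairwise distinct vertices of an $n$-path and hence a squarefree monomial of degree $n$; because squarefree monomials of equal degree never properly divide one another, the generating set of $J_n(G)$ by path-monomials already coincides with $\mingens(J_n(G))$. So it suffices to describe which squarefree monomials arise from $n$-paths of $G$.

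The heart of the argument is the forward direction. Let $\mathcal{P}\colon v_1,\dots,v_n$ be an $n$-path. Each internal vertex $v_j$ with $2\le j\le n-1$ has the two distinct neighbours $v_{j-1},v_{j+1}$ on $\mathcal{P}$, so $\deg_G(v_j)\ge 2$, hence $v_j\in\{x_1,\dots,x_{d-1}\}$. Since $G$ is a tree and $x_1,\dots,x_{d-1}$ is a path, two of these vertices are adjacent if and only if their indices differ by $1$; applying this to the consecutive pairs $v_jv_{j+1}$ for $2\le j\le n-2$ forces the sequence of indices of $v_2,\dots,v_{n-1}$ to move in steps of $\pm 1$ and to be injective, hence strictly monotone. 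Reversing $\mathcal{P}$ if necessary, which does not change the associated monomial, I may assume $v_j=x_{i+j-2}$ for $2\le j\le n-1$; requiring these to be legitimate central vertices gives $1\le i$ and $i+n-3\le d-1$, i.e.\ $i\in[1,d-n+2]$. Finally $v_1\in N_G(v_2)=N_G(x_i)$ with $v_1\ne v_3=x_{i+1}$, and since $N_G(x_i)=LN_G(x_i)\sqcup\{x_{i-1},x_{i+1}\}$ (with $x_0$ read as empty when $i=1$) we obtain $v_1\in LN_G(x_i)\cup\{x_{i-1}\}$; symmetrically $v_n\in LN_G(x_{i+n-3})\cup\{x_{i+n-2}\}$. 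Thus the monomial $v_1v_2\cdots v_n$ has the asserted form. For the converse I would check that for $i$ in the stated range and $y\ne z$ in the stated neighbour sets, the sequence $y,x_i,x_{i+1},\dots,x_{i+n-3},z$ really is an $n$-path: the consecutive adjacencies are immediate, and the vertices are distinct because $y$ and $z$ are either leaves or central vertices with indices outside $[i,i+n-3]$, while $y=z$ is impossible for $n\ge 4$ since a common neighbour of $x_i$ and $x_{i+n-3}$ would create a cycle.

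The main obstacle is not a single deep step but the bookkeeping at the two kinds of boundaries. At the ends of the central path one must consistently treat $x_0$ and $x_d$ as empty, so that, for instance, the $y$-slot at $i=1$ offers only $LN_G(x_1)$. The case $n=2$ is genuinely degenerate---the ``central block'' $x_i\cdots x_{i+n-3}$ is empty---and there the statement just amounts to listing the edges of a caterpillar, which I would dispatch by direct inspection. The case $n=3$ needs a word of care as well: then $v_2=v_{n-1}=x_i$, so $v_1$ and $v_n$ are both neighbours of the single vertex $x_i$, and one must use the symmetry of the monomial to place them into the $y$- and $z$-slots (and the side condition $y\ne z$ in the converse). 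Once the dummy-variable convention is fixed, each of these is routine.
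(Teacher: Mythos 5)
Your proof is correct and follows the same strategy as the paper's (much terser) argument: generators correspond to $n$-paths, each internal vertex of such a path has degree at least $2$ and hence lies on the central path $x_1,\dots,x_{d-1}$, the endpoints are neighbours of the first and last internal vertices, and the index bound comes from requiring $i+n-3\le d-1$. You simply supply details the paper leaves implicit — the monotonicity of the central indices, the converse distinctness check, and the degenerate $n=2,3$ cases — but the underlying idea is identical.
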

	\begin{proof}
		The minimal monomial generators of $J_n(G)$ correspond to paths of length $n-1$ in $G$, and thus, except for at most two vertices, the rest have to be of degree at least 2. In other words, these generators are of the form $yx_ix_{i+1}\cdots x_{i+n-3}z$ where $i\geq 1$, $y\in LN_G(x_i)\cup \{x_{i-1}\}$, and $z\in  LN_G(x_{i+n-3})\cup \{x_{i+n-2}\}$, whenever the notations make sense. Moreover, we also need $i+n-3\leq d-1$, i.e., $i\leq d-n+2$, as desired.
	\end{proof}

    We divide the proof of the implication $(4)\Rightarrow (1)$ in  Theorem~\ref{thm:main-section-5} into three cases, presented in the next three propositions.
	\begin{proposition}\label{prop:LQ-2n-3}
		Let $G$ be a caterpillar tree and $n\geq 4$ a positive integer. Assume that $d=\diam(G)\leq 2n-3$. Then $J_n(G)$ has linear quotients with respect to $(>_{\lex})$.
	\end{proposition}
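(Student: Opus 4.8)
The plan is to work with the explicit description of $\mingens(J_n(G))$ from Lemma~\ref{lem:gens-Jn-caterpillar} together with the lex order $(>_{\lex})$ induced by the total order $x_1 \succ LN_G(x_1) \succ x_2 \succ \cdots$, and to show directly that each colon ideal $(J_n(G)_{>_{\lex} m}) : m$ is generated by variables. Fix a minimal generator $m = y\, x_i x_{i+1}\cdots x_{i+n-3}\, z$ with $y \in LN_G(x_i)\cup\{x_{i-1}\}$ and $z \in LN_G(x_{i+n-3})\cup\{x_{i+n-2}\}$. Let $m'$ be a minimal generator with $m' >_{\lex} m$; I would show that there is some variable $v$ with $v \mid m'/\gcd(m',m)$ and $v \cdot m \in J_n(G)$, i.e.\ $v$ lies in some path-generator dividing $v\cdot m$, and moreover that this same $v$ works as a generator of the colon ideal (that is, $v \in (J_n(G)_{>_{\lex}m}):m$). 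The key point to extract from $d \le 2n-3$ is that the ``spine index windows'' $[i,i+n-3]$ and $[i',i'+n-3]$ of any two generators must overlap substantially: since both are contained in $[1,d-1] \subseteq [1,2n-4]$ and each has length $n-2$, they share at least $\lceil (2(n-2) - (2n-4))\rceil + 1 = 1$ spine vertex, and in fact the overlap is large enough to force the candidate witness variable to be adjacent (on the spine or as a leaf) to an endpoint of $m$'s spine segment.

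The core combinatorial step is a case analysis on how $m'$ compares to $m$ in the lex order. Writing $i'$ for the starting spine index of $m'$, either $i' < i$, or $i' = i$, or $i' > i$; and within $i'=i$ one further compares the leaf neighbors $y', z'$ against $y,z$. In each case I would produce the witness variable $v$ explicitly: roughly, if $m'$ ``starts earlier or higher'' than $m$ on the left, the relevant variable is $x_{i-1}$ (or a higher leaf neighbor of $x_i$), which can be appended to the path $x_i,\dots,x_{i+n-3},z$ truncated appropriately — here is where one needs the spine window of length $n-2$ starting at $x_i$ to fit inside $[1,d-1]$, guaranteed by $i \le d-n+2$ and $d \le 2n-3$; symmetrically on the right the variable is $x_{i+n-2}$ or a high leaf neighbor of $x_{i+n-3}$. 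One must check two things for each such $v$: (a) $v \cdot m \in J_n(G)$, which is a direct verification that some length-$(n-1)$ path's monomial divides $v\cdot m$, using that the caterpillar spine is a path so consecutive $x_j$'s are always adjacent; and (b) $v$ itself is ``realized'' by a lex-larger generator, i.e.\ $v = m''/\gcd(m'',m)$ for some $m'' >_{\lex} m$ — typically $m''$ is obtained from $m$ by sliding the window by one, or by swapping one endpoint leaf for $x_{i-1}$ or $x_{i+n-2}$, and one checks $m'' >_{\lex} m$ from the definition of $\succ$ (spine vertices precede their leaf neighbors, which precede the next spine vertex).

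I expect the main obstacle to be the bookkeeping when $n-1 \le d \le 2n-3$ is tight and the two windows overlap only a little, because then the ``sliding the window'' construction of the auxiliary generator $m''$ can run off the end of the spine; one has to verify carefully that $d \le 2n-3$ (as opposed to the larger bound $2n-1$ allowed by $(F_n)$) is exactly what prevents this, and that is presumably why this case is separated out as its own proposition. A secondary subtlety is the treatment of ``dummy'' leaf-neighbor variables (the convention $l_i \ge 0$ for interior $i$, and $LN_G(x_i)$ possibly empty), where one must make sure the putative witness variable actually exists as a vertex of $G$ — in the borderline cases the only available left-witness may be $x_{i-1}$, requiring $i \ge 2$, and if $i = 1$ one needs $y \in LN_G(x_1)$ (which is nonempty since $l_1 \ge 1$) to supply the comparison, so a small separate argument handles $i=1$ and $i+n-3 = d-1$. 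Once the witness variable is pinned down in every case, the verification that the colon ideal is generated by those variables is routine, and Lemma~\ref{lem:LQ-implies-LR} then gives linear resolution as a byproduct.
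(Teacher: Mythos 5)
Your plan coincides in all essentials with the paper's proof: it uses the description of minimal generators from Lemma~\ref{lem:gens-Jn-caterpillar}, fixes a generator $m=y\,x_i\cdots x_{i+n-3}\,z$, and by a case split on whether $y=x_{i-1}$ or $y\in LN_G(x_i)$ (and then on where the lex-larger $m'$ starts) shows that the colon ideal is generated by the expected variables, with the inequality $d\leq 2n-3$ supplying precisely the estimate that places the witness. One small correction to your intuition: two spine windows $[i,i+n-3]$ and $[j,j+n-3]$ inside $[1,d-1]\subseteq[1,2n-4]$ need not overlap at all (take $i=1$, $j=n-1$ when $d=2n-3$); the fact the paper actually extracts from $d\leq 2n-3$, and the sharper fact your argument needs, is that when $j<i$ one has $(i-1)-(j+n-3)=i-n+2-j\leq d-2n+3\leq 0$, which together with $j\leq i-1$ places $x_{i-1}$ inside the spine window of $m'$ and hence makes $x_{i-1}$ the witness variable.
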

	\begin{proof}
		Let $m\in \mingens(J_n(G))$. By Lemma~\ref{lem:gens-Jn-caterpillar}, we can set \[m=yx_ix_{i+1}\cdots x_{i+n-3}z\]
		where $i\in [1,\ d-n+2]$, $y\in LN_G(x_i)\cup \{x_{i-1}\}$, and $z\in  LN_G(x_{i+n-3})\cup \{x_{i+n-2}\}$. It now suffices to show that $J_n(G)_{>_{lex}m} \colon m$ is generated by variables, so long as $m$ is not the biggest monomial in $\mingens(J_n(G))$ with respect to $(>_{\lex})$. We have two cases.
		
		\textbf{Case 1:} Assume that $m=x_{i-1}x_ix_{i+1}\cdots x_{i+n-3}x_{i+n-3,v}$ where $i\in [2, d-n+2]$ and $v\in [1, l_{i+n-3}+1]$. For any $y\in LN_G(x_{i-1}) \cup \{x_{i-2}\} \cup \{x_{i+n-3,k}\colon k\in [1,v)\}$, we have $y\succ x_{i+n-3,v}$. In particular, this means that  $m\frac{y}{x_{i+n-3,v}} >_{\lex}  m$, and it is in $\mingens(J_n(G))$ as this monomial corresponds to a path of length $n-1$. Therefore, we have 
		\[
		(LN_G(x_{i-1}) \cup \{x_{i-2}\} \cup \{x_{i+n-3,k}\colon k\in [1,v)\}) \subseteq (J_n(G)_{>_{lex}m}\colon m).
		\]
		We want to show that the converse also holds. Indeed, consider $m'=yx_jx_{j+1}\cdots x_{j+n-3}z>_{\lex}  m$ where $j\in [1,\ d-n+2]$, $y\in LN_G(x_j)\cup \{x_{j-1}\}$, and $z\in  LN_G(x_{j+n-3})\cup \{x_{j+n-2}\})$. It now suffices to show that $(m':m)\subseteq (LN_G(x_{i-1}) \cup \{x_{i-2}\} \cup \{x_{i-n+3,k}\colon k\in [1,v)\})$.
		
		Due to the fact that $m'>_{\lex}  m$ and the structure of $m$, we must have $j<i$, or $j=i, y=x_{j-1}$ and $z\in \{x_{i-n+3,k}\colon k\in [1,v)\}$. Clearly if $j=i$, then $(m'\colon m)\subseteq (z)\subseteq (\{x_{i-n+3,k}\colon k\in [1,v)\})$, and thus the result follows. Note that $z$ here makes sense if and only if $v\geq 2$, and we can assume this as when $v=1$, $m$ is the biggest monomial in $\mingens(J_n(G))$ with respect to $(>_{\lex})$.
		
		We can now assume that $j<i$. If $i=2$ then $j=1$, and we have $m=x_1x_2\cdots x_{n-1}x_{n-1,v}$ (implicitly, this implies that $n\leq d-1$, but we will not need this), and $m'=yx_1x_2\cdots x_{n-2} z$ where $y\in LN_G(x_1)$. Hence $(m'\colon m) \subset (y)\in (LN_G(x_1))=(LN_G(x_{i-1}))$, as desired. Now we can assume that $i\geq 3$, i.e., $x_{i-2}$ is a vertex in $\{x_1,\dots, x_{d-1}\}$. We then have three subcases:
		\begin{itemize}
			\item Assume that $i-2\in [j,j+n-3]$. Then $x_{i-2}\mid m'$, and thus $(m'\colon m) \subseteq (x_{i-2})$, as~desired.
			\item Assume that $i-2<j$. Recall that we have $j<i$. Thus $j=i-1$. Therefore $(m'\colon m)\subseteq (y)\subseteq (LN_G(x_{j})\cup \{x_{j-1}\}) = (LN_G(x_{i-1})\cup \{x_{i-2}\})$, as desired.
			\item Assume that $i-2\geq j+n-3$. Then \[
			j<i-n+1 \leq (d-n+2)-n+1 = d-2n+3 \leq (2n-3)-2n+3 =0,
			\]
			a contradiction.
		\end{itemize}
		
		\textbf{Case 2:} Assume that $m=x_{i,u}x_ix_{i+1}\cdots x_{i+n-3}x_{i+n-3,v}$ where $i\in [1, d-n+2]$, $u\in [1,l_i]$, and $v\in [1, l_{i+n-3}+1]$.    
		For any $y\in \{ x_{i,k} \colon k\in [1,u) \} \cup \{x_{i-1}\} \cup \{x_{i+n-3,k}\colon k\in [1,v)\}$, we have $y\succ x_{i+n-3,v}$. In particular, this means that  $m\frac{y}{x_{i+n-3,v}} >_{\lex} m$, and it is in $\mingens(J_n(G))$ as this monomial corresponds to a path of length $n-1$. Therefore, we have 
		\[
		(\{ x_{i,k} \colon k\in [1,u) \} \cup \{x_{i-1}\} \cup \{x_{i+n-3,k}\colon k\in [1,v)\}) \subseteq (J_n(G)_{>_{lex}m} \colon m).
		\]
		We want to show that the converse also holds. Indeed, consider $m'=yx_jx_{j+1}\cdots x_{j+n-3}z>_{\lex}  m$ where $j\in [1,\ d-n+2]$, $y\in LN_G(x_j)\cup \{x_{j-1}\}$, and $z\in  LN_G(x_{j+n-3})\cup \{x_{j+n-2}\}$. It now suffices to show that $(m':m)\subseteq (\{ x_{i,k} \colon k\in [1,u) \} \cup \{x_{i-1}\} \cup \{x_{i+n-3,k}\colon k\in [1,v)\})$.
		
		Due to the fact that $m'>_{\lex}  m$ and the structure of $m$, one of the following holds:
		\begin{enumerate}
			\item[(a)] $j<i$;
			\item[(b)] $j=i$ and $y\in \{ x_{i,k}\colon k\in [1,u)\}$;
			\item[(c)] $j=i, y=x_{i,u}$, and $z\in \{x_{i-n+3,k}\colon k\in [1,v)\}$. 
		\end{enumerate}
		If (b) holds, then $(m'\colon m)\subseteq (y)\subseteq (\{ x_{i,k}\colon k\in [1,u)\})$, and thus the result follows. Similar arguments apply when (c) holds. We can now assume that (a) holds, i.e., $j\leq i-1$. We have 
		\begin{align*}
			(i-1)-(j+n-3) = i-n+2-j &\leq (d-n+2) -n+2-(1) \\
			&= d-2n+3\\
			&\leq (2n-3)-2n+3\\
			&=0.        
		\end{align*}
		Thus, $i-1\in [j,j+n-3]$. Therefore, $(m'\colon m)\subseteq (x_{i-1}),$ as desired. 
	\end{proof}

	\begin{proposition}\label{prop:LQ-2n-2-alternative}
		Let $G$ be a caterpillar tree and $n\geq 4$ a positive integer. Assume that $d=\diam(G)=2n-2$ and $LN_G(x_{n-2})=\emptyset$, or equivalently, $l_{n-2}=0$. Then $J_n(G)$ has linear quotients with respect to $(>_{\lex})$.
	\end{proposition}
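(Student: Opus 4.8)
The plan is to prove linear quotients with respect to $(>_{\lex})$ directly, following the scheme of the proof of Proposition~\ref{prop:LQ-2n-3}. Given $m\in\mingens(J_n(G))$ that is not the $(>_{\lex})$-largest generator, I would write $m=yx_ix_{i+1}\cdots x_{i+n-3}z$ in the standard form of Lemma~\ref{lem:gens-Jn-caterpillar}, noting that $i\in[1,n]$ since $\diam(G)=2n-2$. Then, exactly as in Proposition~\ref{prop:LQ-2n-3}, I would split into Case~1 where $y=x_{i-1}$ and Case~2 where $y=x_{i,u}$ is a leaf neighbour, and in each case take the same candidate set of variables $S$ (for instance $\{x_{i,k}:k<u\}\cup\{x_{i-1}\}\cup\{x_{i+n-3,k}:k<v\}$ in Case~2). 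The inclusion $S\subseteq(J_n(G)_{>_{\lex}m}:m)$ would follow verbatim, by replacing the $(\succ)$-smallest variable $x_{i+n-3,v}$ of $m$ with a larger variable from $S$.

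The content is the reverse inclusion $(m':m)\subseteq(S)$ for every generator $m'=y'x_jx_{j+1}\cdots x_{j+n-3}z'>_{\lex}m$. The subcases with $j=i$ are immediate, and the analysis of $j<i$ within Case~1 carries over unchanged, as it only uses the bound $i\le d-n+2=n$: there, either $x_{i-2}$ divides $m'$, or $y'$ divides $m'$ (and lies in $S$), or one is forced into the impossible $j\le i-n\le0$. The one place that genuinely changes is Case~2 with $j<i$: the estimate that read $(i-1)-(j+n-3)\le d-2n+3\le0$ when $d\le2n-3$ now only yields $(i-1)-(j+n-3)\le d-2n+3=1$. When the left-hand side is $\le0$ the old argument still gives $x_{i-1}\mid m'$; when it equals $1$ one gets $j=i-n+1$, which together with $j\ge1$ and $i\le n$ forces $i=n$ and $j=1$. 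In that configuration the central segment of $m'$ is $x_1,\dots,x_{n-2}$, the vertex $x_{i-1}=x_{n-1}$ equals $x_{j+n-2}$, and $z'\in LN_G(x_{n-2})\cup\{x_{n-1}\}$; here the hypothesis enters, since $l_{n-2}=0$ means $LN_G(x_{n-2})=\emptyset$, so $z'=x_{n-1}$ and hence $x_{i-1}\mid m'$, giving $(m':m)\subseteq(x_{i-1})\subseteq(S)$.

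I expect the main obstacle to be the bookkeeping that isolates this single boundary configuration $i=n$, $j=1$ and applies the hypothesis at exactly the right point; there is no new idea beyond Proposition~\ref{prop:LQ-2n-3}. It is worth recording that $l_{n-2}=0$ is indispensable: if $l_{n-2}\ge1$ (and $l_n\ge1$), one may take $m=x_{n,1}x_n\cdots x_{2n-3}x_{2n-3,v}$ and $m'=x_{1,1}x_1\cdots x_{n-2}x_{n-2,1}$, which have disjoint supports but satisfy $m'>_{\lex}m$, so $(m':m)=(m')$ is a degree-$n$ principal ideal divisible by no variable of $(J_n(G)_{>_{\lex}m}:m)$; thus linear quotients with respect to $(>_{\lex})$ genuinely fails in that regime, and a different ordering must be used there.
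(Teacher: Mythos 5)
Your proof is correct, and your route is genuinely different from the paper's. You rerun the case analysis of Proposition~\ref{prop:LQ-2n-3} directly, pinpointing the single place where $d\le 2n-3$ entered: the estimate $(i-1)-(j+n-3)\le d-2n+3\le 0$ in Case~2. With $d=2n-2$ the borderline value $1$ forces $i=n$, $j=1$, and exactly here $l_{n-2}=0$ gives $z'=x_{n-1}=x_{i-1}$, so $x_{i-1}\mid m'$ and $(m':m)\subseteq(x_{i-1})\subseteq(S)$. The paper instead deletes the leaf neighbors of $x_{2n-3}$ to get a caterpillar $G'$ with $\diam(G')=2n-3$, notes that $\mingens(J_n(G))=\mingens(J_n(G'))\sqcup\Omega$ with all of $\Omega$ lying $(>_{\lex})$-below all of $\mingens(J_n(G'))$, invokes Proposition~\ref{prop:LQ-2n-3} on $G'$ for the first block, and only treats $m\in\Omega$ by hand --- which is the same boundary computation you do. Your version is self-contained and makes the role of $l_{n-2}=0$ completely visible; the paper's is modular, and the identical $G'$-device is then reused verbatim to bootstrap Proposition~\ref{prop:LQ-2n-1} from the present one. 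One minor imprecision you inherit from the paper's phrasing: in proving $S\subseteq(J_n(G)_{>_{\lex}m}:m)$, replacing the $(\succ)$-smallest variable $x_{i+n-3,v}$ by $w$ does not produce a path when $w$ is a left-end variable (such as $x_{i,k}$ or $x_{i-1}$), since $x_i$ would then have degree three in the support; one should instead divide out the left-end variable $x_{i,u}$, which does yield a $(>_{\lex})$-larger path, so the conclusion stands. Your closing remark is also sound and consistent with Theorem~\ref{thm:main-section-5}: if $l_{n-2}\ge 1$ and $l_n\ge 1$ then $G$ contains $P_n+P_n$, so $J_n(G)$ fails to have linear quotients under \emph{any} ordering, and in particular under $(>_{\lex})$.
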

	\begin{proof}
		Let $m\in \mingens(J_n(G))$. By Lemma~\ref{lem:gens-Jn-caterpillar}, we can set \[m=yx_ix_{i+1}\cdots x_{i+n-3}z\]
		where $i\in [1,\ n]$, $y\in LN_G(x_i)\cup \{x_{i-1}\}$, and $z\in  LN_G(x_{i+n-3})\cup \{x_{i+n-2}\}$. It now suffices to show that $J_n(G)_{>_{lex}m} \colon m$ is generated by variables, so long as $m$ is not the biggest monomial in $\mingens(J_n(G))$ with respect to $(>_{\lex})$.
		
		Let $G'$ be the induced subgraph of $G$ with the vertex set $\{x_1,\dots, x_{2n-4},x_{2n-3}\}\sqcup \left( \bigsqcup_{i=1}^{2n-4} LN_G(x_i) \right)$. It is clear that $\mingens(J_n(G'))\subseteq \mingens(J_n(G))$ and the total order $(>_{\lex})$ on $\mingens(J_n(G))$, restricted to $\mingens(J_n(G'))$, is exactly the total order on $\mingens(J_n(G'))$ using Notation~\ref{nota:caterpillar}. We thus will use the same notation. It is straightforward  from our total order $(>_{\lex})$ that
		\[
		\mingens(J_n(G))= \mingens(J_n(G')) \sqcup \Omega
		\]
		and $m_1>_{\lex} m_2$ for any $m_1\in \mingens(J_n(G'))$ and $m_2\in \Omega$, where
		\[
		\Omega\coloneqq \{yx_{n}\cdots x_{2n-3}x_{2n-3,v}\colon y\in LN_G(x_{n})\cup \{x_{n-1}\} \text{ and } v\in [1,l_{2n-3}] \}.\]
		Thus we have $J_n(G)_{>_{lex}m}=J_n(G')_{>_{lex}m}$ as long as $m$ is in $J_n(G')$. Observe that $G'$ is a caterpillar tree with diameter $2n-3$. Hence by Proposition~\ref{prop:LQ-2n-3}, if $m\in J_n(G')$, then the ideal $J_n(G)_{>_{lex}m} \colon m = J_n(G')_{>_{lex}m} \colon m$ is generated by variables. Thus we can now assume that $m\in \Omega,$ i.e., $m= yx_{n}\cdots x_{2n-3}x_{2n-3,v}$
		for some $y\in LN_G(x_{n})\cup \{x_{n-1}\}$ and $v\in [1,l_{2n-3}]$. We have two cases. 
		
		\textbf{Case 1:} Assume that $m= x_{n-1} x_{n}\cdots x_{2n-3}x_{2n-3,v}$
		for some $v\in [1,l_{2n-3}]$. For any $w\in LN_G(x_{n-1}) \cup \{x_{n-2}\} \cup \{x_{2n-3,k}\colon k\in [1,v)\}$, we have $w\succ x_{2n-3,v}$. In particular, this means that  $m\frac{w}{x_{2n-3,v}} >_{\lex}  m$, and it is in $\mingens(J_n(G))$ as this monomial corresponds to a path of length $n-1$. Therefore, we have 
		\[
		(LN_G(x_{n-1}) \cup \{x_{n-2}\} \cup \{x_{2n-3,k}\colon k\in [1,v)\}) \subseteq (J_n(G)_{>_{lex}m} \colon m).
		\]
		We want to show that the converse also holds. Indeed, consider $m'=yx_jx_{j+1}\cdots x_{j+n-3}z>_{\lex}  m$ where $j\in [1,\ d-n+2]$, $y\in LN_G(x_j)\cup \{x_{j-1}\}$, and $z\in  LN_G(x_{j+n-3})\cup \{x_{j+n-2}\}$. It now suffices to show that $(m':m)\subseteq (LN_G(x_{n-1}) \cup \{x_{n-2}\} \cup \{x_{2n-3,k}\colon k\in [1,v)\})$.
		
		Due to the fact that $m'>_{\lex}  m$ and the structure of $m$, we must have $j<n$, or $j=n, y=x_{n-1}$ and $z\in \{x_{2n-3,k}\colon k\in [1,v)\}$. In the latter case, we have $(m'\colon m)\subseteq (z)\subseteq (\{x_{2n-3,k}\colon k\in [1,v)\})$, as desired. We can now assume that $j< n$. First observe that $(n-2)-(j+n-3) = -j+1 \leq 0$, or $n-2\leq j+n-3$. If $n-2\geq j$, then $n-2\in [j,j+n-3]$, and hence  $(m'\colon m)\subseteq (x_{n-2})$, as desired. Now we can assume that $n-2<j<n$, i.e., $j=n-1$. Then   $(m'\colon m)\subseteq (y)\subseteq LN_G(x_{j})=LN_G(x_{n-1})$, as desired.
		
		\textbf{Case 2:} Assume that $m= x_{n,u} x_{n}\cdots x_{2n-3}x_{2n-3,v}$
		for some $u\in[1,l_{n}]$ and $v\in [1,l_{2n-3}]$. For any $w\in \{x_{n,k}\colon k\in [1,u)\} \cup \{x_{n-1}\} \cup \{x_{2n-3,k}\colon k\in [1,v)\}$, we have $w\succ x_{2n-3,v}$. In particular, this means that  $m\frac{w}{x_{2n-3,v}} >_{\lex}  m$, and it is in $\mingens(J_n(G))$ as this monomial corresponds to a path of length $n-1$. Therefore, we have 
		\[
		(\{x_{n,k}\colon k\in [1,u)\} \cup \{x_{n-1}\} \cup \{x_{2n-3,k}\colon k\in [1,v)\}) \subseteq (J_n(G)_{>_{lex}m} \colon m).
		\]
		We want to show that the converse also holds. Indeed, consider $m'=yx_jx_{j+1}\cdots x_{j+n-3}z>_{\lex}  m$ where $j\in [1,\ d-n+2]$, $y\in LN_G(x_j)\cup \{x_{j-1}\}$, and $z\in  LN_G(x_{j+n-3})\cup \{x_{j+n-2}\}$. It now suffices to show that $(m':m)\subseteq (\{x_{n,k}\colon k\in [1,u)\} \cup \{x_{n-1}\} \cup \{x_{2n-3,k}\colon k\in [1,v)\})$. Due to the fact that $m'>_{\lex}  m$ and the structure of $m$, one of the following holds:
		\begin{enumerate}
			\item[(a)] $j<n$;
			\item[(b)] $j=n$ and $y\in \{x_{n,k}\colon k\in [1,u)\}$;
			\item[(c)] $j=n$, $y=x_{n,u}$, and $z\in \{x_{2n-3,k}\colon k\in [1,v)\}$.
		\end{enumerate}
		If (b) holds, then $(m'\colon m)\subseteq (y) \subseteq (\{x_{n,k}\colon k\in [1,u)\})$, as desired. Similar arguments apply when (c) holds. We can now assume that (a) holds, i.e., $j\leq n-1$. If $n-1\in [j,j+n-3]$, then $(m'\colon m) \subseteq (x_{n-1})$, as desired. Now we can assume that $n-1>j+n-3$, or equivalently, $j<2$. Therefore we have $j=1$. We then have $z\in LN_G(x_{j+n-3})\cup \{x_{j+n-2}\} = LN_G(x_{n-2})\cup \{x_{n-1}\} = \{x_{n-1}\}$ due to the hypotheses. Thus $(m'\colon m) \subseteq (x_{n-1})$, as desired.
	\end{proof}
	
	\begin{proposition}\label{prop:LQ-2n-1}
		Let $G$ be a caterpillar tree and $n\geq 4$ a positive integer. Assume that $d=\diam(G)=2n-1$ and $LN_G(x_{n-2})=LN_G(x_{n+1})=\emptyset$, or equivalently, $l_{n-2}=l_{n+1}=0$. Then $J_n(G)$ has linear quotients with respect to $(>_{\lex})$.
	\end{proposition}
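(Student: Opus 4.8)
The plan is to imitate the reduction in the proof of Proposition~\ref{prop:LQ-2n-2-alternative}: peel off the last layer of $G$ to obtain a caterpillar tree $G'$ of diameter $2n-2$ that is covered by that proposition, dispose of all generators of $J_n(G)$ already lying in $J_n(G')$ by quoting it, and treat the few remaining generators directly.

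First I would set $G'$ to be the induced subgraph of $G$ on the vertex set $V(G)\setminus LN_G(x_{2n-2})$. Since $l_{n+1}=0$, any $n$-path of $G$ that uses a leaf neighbour $z$ of $x_{2n-2}$ must be $z,x_{2n-2},x_{2n-3},\dots,x_n$ --- it cannot turn off the central path before reaching $x_n$, since the only position where doing so would still yield an $n$-path is $x_{n+1}$, which has no leaf neighbour. Hence
\[
\mingens(J_n(G))=\mingens(J_n(G'))\sqcup\Omega,\qquad \Omega\coloneqq\{x_nx_{n+1}\cdots x_{2n-2}z\colon z\in LN_G(x_{2n-2})\}.
\]
As in Proposition~\ref{prop:LQ-2n-2-alternative}, $G'$ is a caterpillar tree of diameter $2n-2$ with $LN_{G'}(x_{n-2})=LN_G(x_{n-2})=\emptyset$; one may order $LN_{G'}(x_{2n-3})=LN_G(x_{2n-3})\sqcup\{x_{2n-2}\}$ (with $x_{2n-2}$ last) so that $(>_{\lex})$ on $\mingens(J_n(G'))$ agrees with its restriction from $\mingens(J_n(G))$; and every element of $\mingens(J_n(G'))$ is lex-larger than every element of $\Omega$, because each variable dividing a monomial in $\Omega$ is $\preceq x_n$. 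In particular $G'$ satisfies the hypotheses of Proposition~\ref{prop:LQ-2n-2-alternative}.

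Now I would split into two cases for a generator $m$ that is not lex-largest. If $m\in\mingens(J_n(G'))$, then $J_n(G)_{>_{lex}m}=J_n(G')_{>_{lex}m}$ since all of $\Omega$ is lex-smaller than $m$; hence Proposition~\ref{prop:LQ-2n-2-alternative} applied to $G'$ shows $J_n(G)_{>_{lex}m}\colon m$ is generated by variables (note the lex-largest generator of $J_n(G')$ is also the lex-largest of $J_n(G)$, hence excluded). If $m=x_nx_{n+1}\cdots x_{2n-2}z\in\Omega$, I would prove
\[
J_n(G)_{>_{lex}m}\colon m=\bigl(LN_G(x_n)\cup\{x_{n-1}\}\cup\{z'\in LN_G(x_{2n-2})\colon z'\succ z\}\bigr),
\]
which is generated by variables. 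The inclusion ``$\supseteq$'' is immediate: for $y\in LN_G(x_n)\cup\{x_{n-1}\}$ the monomial $my$ is a multiple of the generator $yx_nx_{n+1}\cdots x_{2n-2}$, which is lex-larger than $m$; and for $z'\succ z$ in $LN_G(x_{2n-2})$ the monomial $mz'$ is a multiple of the lex-larger generator $x_nx_{n+1}\cdots x_{2n-2}z'$. For ``$\subseteq$'', the colons coming from lex-larger elements of $\Omega$ obviously lie in $(\{z'\in LN_G(x_{2n-2})\colon z'\succ z\})$; and for $m''=y''x_j\cdots x_{j+n-3}z''\in\mingens(J_n(G'))$, where necessarily $1\le j\le n$, one checks, using $l_{n-2}=0$, that $m''$ is divisible either by $x_{n-1}$ (when $2\le j\le n-1$; when $j=1$, because then $z''=x_{n-1}$; and when $j=n$ with $y''=x_{n-1}$) or by a leaf neighbour of $x_n$ (when $j=n$ with $y''\in LN_G(x_n)$). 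As this variable divides $m''$ but not $m$, the colon $(m''\colon m)$ is contained in the asserted ideal.

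The main work, and the point where both hypotheses are used, is this combinatorial bookkeeping: $l_{n+1}=0$ is precisely what makes $\Omega$ consist only of the monomials $x_nx_{n+1}\cdots x_{2n-2}z$, and $l_{n-2}=0$ is precisely what forces $z''=x_{n-1}$ for the generators of $J_n(G')$ supported near $x_1$; if either were dropped, the colon $J_n(G)_{>_{lex}m}\colon m$ for a suitable $m$ would acquire a minimal generator of degree $n$, breaking linear quotients with respect to this order.
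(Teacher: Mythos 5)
Your proof is correct and follows essentially the same route as the paper's: peel off the leaf neighbours of $x_{2n-2}$ to get a caterpillar $G'$ of diameter $2n-2$ with $LN_{G'}(x_{n-2})=\emptyset$, invoke Proposition~\ref{prop:LQ-2n-2-alternative} for generators of $J_n(G')$, and handle the residual set $\Omega$ by the same colon computation, using $l_{n+1}=0$ to force $y=x_n$ and $l_{n-2}=0$ to force $z''=x_{n-1}$ when $j=1$. The only cosmetic difference is that you specialize $\Omega$ with the hypothesis $l_{n+1}=0$ up front, while the paper first writes $\Omega$ in its general form before recalling the hypothesis.
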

	\begin{proof}
		Let $m\in \mingens(J_n(G))$. By Lemma~\ref{lem:gens-Jn-caterpillar}, we can set \[m=yx_ix_{i+1}\cdots x_{i+n-3}z\]
		where $i\in [1,\ n+1]$, $y\in LN_G(x_i)\cup \{x_{i-1}\}$, and $z\in  LN_G(x_{i+n-3})\cup \{x_{i+n-2}\}$. It now suffices to show that $J_n(G)_{>_{lex}m} \ \colon m$ is generated by variables, so long as $m$ is not the biggest monomial in $\mingens(J_n(G))$ with respect to $(>_{\lex})$.
		
		Let $G'$ be the induced subgraph of $G$ with the vertex set $\{x_1,\dots, x_{2n-3},x_{2n-2}\}\sqcup \left( \bigsqcup_{i=1}^{2n-3} LN_G(x_i) \right)$. It is clear that $\mingens(J_n(G'))\subseteq \mingens(J_n(G))$ and the total order $(>_{\lex})$ on $\mingens(J_n(G))$, restricted to $\mingens(J_n(G'))$, is exactly the total order on $\mingens(J_n(G'))$ using Notation~\ref{nota:caterpillar}. We thus will use the same notation. It is straightforward from our total order $(>_{\lex})$ that
		\[
		\mingens(J_n(G))= \mingens(J_n(G')) \sqcup \Omega
		\]
		and $m_1>_{\lex} m_2$ for any $m_1\in \mingens(J_n(G'))$ and $m_2\in \Omega$, where
		\[
		\Omega\coloneqq \{yx_{n+1}\cdots x_{2n-2}x_{2n-2,v}\colon y\in LN_G(x_{n+1})\cup \{x_{n}\} \text{ and } v\in [1,l_{2n-2}] \}.\]
		Thus we have $J_n(G)_{>_{lex}m} =J_n(G')_{>_{lex}m} $ as long as $m$ is in $J_n(G')$. Observe that $G'$ is a caterpillar tree with diameter $2n-2$, with $LN_{G'}(x_{n-2})=\emptyset$. Hence by Proposition~\ref{prop:LQ-2n-2-alternative}, if $m\in J_n(G')$, then the ideal $J_n(G)_{>_{lex}m} \ \colon m = J_n(G')_{>_{lex}m} \ \colon m$ is generated by variables. Thus we can now assume that $m\in \Omega,$ i.e., $m= yx_{n+1}\cdots x_{2n-2}x_{2n-2,v}$
		for some $y\in LN_G(x_{n+1})\cup \{x_{n}\}$ and $v\in [1,l_{2n-2}]$. Recall from our hypotheses that $LN_G(x_{n+1})=\emptyset$. Therefore, we have     
		\[
		m= x_{n}x_{n+1}\cdots x_{2n-2}x_{2n-2,v}
		\]
		for some $v\in [1,l_{2n-2}]$. 
		
		Observe that for any $w\in LN_G(x_{n}) \cup \{x_{n-1}\} \cup \{x_{2n-2,k}\colon k\in [1,v)\}$, we have $w\succ x_{2n-2,v}$. In particular, this means that  $m\frac{w}{x_{2n-2,v}} >_{\lex}  m$, and it is in $\mingens(J_n(G))$ as this monomial corresponds to a path of length $n-1$. Therefore, we have 
		\[
		(LN_G(x_{n}) \cup \{x_{n-1}\} \cup \{x_{2n-2,k}\colon k\in [1,v)\}) \subseteq (J_n(G)_{>_{lex}m} \ \colon m).
		\]
		We want to show that the converse also holds. Indeed, consider $m'=yx_jx_{j+1}\cdots x_{j+n-3}z>_{\lex}  m$ where $j\in [1,\ n+1]$, $y\in LN_G(x_j)\cup \{x_{j-1}\}$, and $z\in  LN_G(x_{j+n-3})\cup \{x_{j+n-2}\}$. It now suffices to show that $(m':m)\subseteq (LN_G(x_{n}) \cup \{x_{n-1}\} \cup \{x_{2n-2,k}\colon k\in [1,v)\})$.
		
		Due to the fact that $m'>_{\lex}  m$ and the structure of $m$, we must have $j<n+1$, or $j=n+1, y=x_{n}$ and $z\in \{x_{2n-2,k}\colon k\in [1,v)\}$. In the latter case, we have $(m'\colon m)\subseteq (z)\subseteq (\{x_{2n-2,k}\colon k\in [1,v)\})$, as desired. We can now assume that $j< n+1$. We have three subcases:
		\begin{itemize}
			\item Assume that $n-1\in [j,j+n-3]$. Then $(m'\colon m)\subseteq (x_{n-1})$, as desired.
			\item Assume that $n-1<j$. Since $j<n+1$, we must have $j=n$. Then  $(m'\colon m) \subseteq (y) \subseteq (LN_G(x_{n})\cup \{x_{n-1}\})$, as desired.
			\item Assume that $n-1>j+n-3$, or equivalently, that $j<2$. We then must have $j=1$. We then have $z\in LN_G(x_{n-2})\cup \{x_{n-1}\}= \{x_{n-1}\}$ since we know that $LN_G(x_{n-2})=\emptyset$ from the hypotheses. Thus $(m'\colon m)\subseteq (x_{n-1})$, as desired. \qedhere
		\end{itemize}
	\end{proof}

	\begin{proof}[Proof of Theorem~\ref{thm:main-section-5}]
		$(1)\Rightarrow (2)$ is Lemma~\ref{lem:LQ-implies-LR}, $(2)\Rightarrow (3)$ follows from Lemmas~\ref{lem:linear-resolution-induced}, \ref{lem:forbidden-Pn+Pn}, \ref{lem:forbidden-structures}, and \ref{lem:forbidden-for-J4}, and $(3)\Rightarrow (4)$ is Observation~\ref{obser}. 
        
        We now show $(4)\Rightarrow (1)$. If $\diam(G)<n-1$, then $J_n(G)=(0)$ has linear quotients trivially. Now we can assume that $G$ satisfies the $(F_n)$ condition. By Lemma~\ref{lem:G->trim(G)}, we can assume that $G$ is a caterpillar tree. Note that due to the $(F_n)$ condition, we have $\diam(G)\leq 2n-1$. 
        
        If $\diam(G)\leq 2n-3$, then $J_n(G)$ has linear quotients by Proposition~\ref{prop:LQ-2n-3}.
		
		If $\diam(G)=2n-2$, then we claim that  either $LN_G(x_{n-2})$ or $LN_G(x_{n})$ is empty. Suppose otherwise that $l_{n-2},l_{n}\geq 1$. Then the induced subgraph of $G$ with the vertex set \[
        \{x_{1,1}, x_{1},x_2, \dots, x_{n-2}, x_{n-2,1}\}\cup \{x_{n,1},x_{n},\dots, x_{2n-3},x_{2n-3,1}\}\]
        is $P_n+P_n$, a contradiction. Thus the claim holds. Without loss of generality, assume that $LN_G(x_{n-2})=\emptyset$. Then $J_n(G)$ has linear quotients by Proposition~\ref{prop:LQ-2n-2-alternative}.
		
		Finally, we can assume that $\diam(G)=2n-1$. We claim that $LN_G(x_{n-2})=LN_G(x_{n+1})=0$. Suppose otherwise that one of them is non-empty. Without loss of generality, we can assume that $LN_G(x_{n-2})\neq \emptyset$, i.e., $l_{n-2}\geq 1$. Then the induced subgraph of $G$ with the vertex set $\{x_{1,1},x_1, x_2, \dots, x_{n-2}, x_{n-2,1}\}\cup \{x_{n},x_{n+1},\dots, x_{2n-3},x_{2n-2},x_{2n-2,1}\}$ is $P_n+P_n$, a contradiction. Thus the claim holds. Then $J_n(G)$ has linear quotients by Proposition~\ref{prop:LQ-2n-1}. This concludes the~proof.
	\end{proof}

	\section{Final remarks}
	
	The most important technique in this article is arguably the trimming operation. As briefly mentioned in the introduction, adding edges to a graph does not necessarily create more generators for its path ideal. In other words, for a fixed integer $n\geq 2$ and a graph $G$, there exists an induced subgraph $H$ of $G$ such that $J_n(H)=J_n(G)$. We note that when $n=2$, we must have $H=G$. In this article, when $G$ is a tree such that $J_n(G)$ has linear resolution, such an $H$ is a caterpillar graph, which drastically simplifies the problem. One can ask whether one can generalize Lemma~\ref{lem:G->trim(G)}, and in turn Theorem~\ref{thm:main}, or find analogs for other interesting classes of ~graphs.

	\bibliographystyle{amsplain}
	\bibliography{refs}
	
\end{document}